\newtheorem{theorem}{Theorem}[section]
\newtheorem{lemma}[theorem]{Lemma}
\theoremstyle{definition}
\newtheorem{definition}[theorem]{Definition}
\theoremstyle{remark}
\DeclareMathOperator{\Gl}{GL}
\DeclareMathOperator{\Gal}{Gal}
\DeclareMathOperator{\Aut}{Aut}
\numberwithin{equation}{section}
\begin{document}

\title{Torsion growth of rational elliptic curves in sextic number fields}

\author{Tomislav Gu\v{z}vi\'c}
\thanks{The author gratefully acknowledges support
from the QuantiXLie Center of Excellence, a project co-financed by the Croatian Government and European Union through the
European Regional Development Fund - the Competitiveness and Cohesion Operational Programme (Grant KK.01.1.1.01.0004) and
by the Croatian Science Foundation under the project no. IP-2018-01-1313.}

\begin{abstract}
We classify the possible torsion structures of rational elliptic
curves over sextic number fields.
\end{abstract}

\maketitle

\section{Introduction}
Let $E/K$ be an elliptic curve defined over a number field $K$. The Mordell-Weil Theorem states that the set of $K$-rational points $E(K)$, is a finitely generated abelian group.
Denote by $E(K)_{tors}$ the torsion subgroup of $E(K)$, which is isomorphic to $C_m \oplus C_n$ for
two positive integers $m, n$, where $m$ divides $n$ and where $C_n$ is a cyclic group of order $n$.
\\
For a positive integer $d$,
\begin{enumerate}
    \item Let $\Phi(d)$ be the set of possible isomorphism classes of groups $E(K)_{tors}$, where $K$ runs through all number fields $K$ of degree $d$ and $E$ runs through all elliptic curves over $K$.
    \item Let $\Phi_{\mathbb{Q}}(d) \subseteq \Phi(d)$ be the set of possible isomorphism classes of groups $E(K)_{tors}$, where $K$ runs through all number fields $K$ of degree $d$ and $E$ runs through all elliptic curves defined over $\mathbb{Q}$.
    \item Let $\Phi^{\infty}(d)$ be the subset of isomorphism classes of groups $\Phi(d)$ that occur infinitely often. More precisely, a torsion group $G$ belongs to $\Phi^{\infty}(d)$ if there are infinitely many elliptic curves $E$, non-isomorphic over $\bar{\mathbb{Q}}$, such that $E(K)_{tors} \cong G.$
    \item Let $R_{\mathbb{Q}}(d)$ be the set of all primes $p$ such that there exists a number field $K$ of degree $d$, an elliptic curve $E/\mathbb{Q}$ such that there exists a point of order $p$ on $E(K)$.
    \item Let $\Phi^{\mathrm{CM}}(d)$ be the set of possible isomorphism classes of groups $E(K)_{tors}$, where $K$ runs through all number fields $K$ of degree $d$ and $E$ runs through all elliptic curves with $\mathrm{CM}$ over $K$.
\end{enumerate}   

The set $\Phi(1)$ was determined by Mazur in \cite{11}. Kenku, Momose and Kamienny determined $\Phi(2)$ in \cite{20}, \cite{13}. For $d \ge 3$, the set $\Phi(3)$ has been determined by Derickx, Etropolski, Hoeij, Morrow and Zureick-Brown but the result is (at the time of writing this paper) unpublished. In \cite{21}, Merel proved that $\Phi(d)$ is finite for all positive integers $d$. It is known that $\Phi(1)=\Phi^{\infty}(1)$ and $\Phi(2)=\Phi^{\infty}(2)$.
\\
The set $\Phi^{\infty}(d)$ has been determined for $d=3$ by Jeon, Kim and Schweizer \cite{25}, for $d=4$ by Jeon, Kim and Park \cite{26} and for $d=5,6$ by Derickx and Sutherland \cite{27}. 
\\
In \cite{7}, Najman has determined the sets $\Phi_{\mathbb{Q}}(2)$ and $\Phi_{\mathbb{Q}}(3)$.$\Phi_{\mathbb{Q}}(4)$ has been determined by Chou \cite{22} and Najman and Gonz\'alez-Jim\'enez \cite{2}. Najman and Gonz\'alez-Jim\'enez also determined $\Phi_{\mathbb{Q}}(p)$, for $p \ge 7$ prime number in \cite{2}. Gonz\'alez-Jim\'enez has determined $\Phi_{\mathbb{Q}}(5)$ in \cite{24}. The set $R_{\mathbb{Q}}(d)$, for $d \le 3342296$ has been determined by Najman and Gonz\'alez-Jim\'enez in \cite{2}. $\Phi^{\mathrm{CM}}(1)$ has been determined by Olson in \cite{32} and $\Phi^{\mathrm{CM}}(d)$ for $d=3,4$ by Zimmer and his collaborators.
The sets $\Phi^{\mathrm{CM}}(d)$, for $4\le d \le 13$ have been determined by Clark, Corn, Rice and Stankiewicz in \cite{30}.

The main result of this paper is the following theorem.
\begin{theorem}
	\label{Theorem 1.1}
Let $E/\mathbb{Q}$ be an elliptic curve and let $K$ be a sextic number field. Then
\[ E(K)_{tors} \cong
\begin{cases} 
      C_m, & m=1,...,16, 18, 21, 30, m \ne 11,\\
      C_{2} \oplus C_{2m}, & m=1,...,7, 9,\\
      C_3 \oplus C_{3m}, & m=1,...,4, \\
      C_4 \oplus C_{4m}, & m=1,3, \\
      C_6 \oplus C_6, \\
      C_3 \oplus C_{18}.
\end{cases} \]
\end{theorem}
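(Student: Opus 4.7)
The plan is to attack the classification one prime at a time, using the decomposition $E(K)_{tors} \cong \bigoplus_p E(K)[p^\infty]$, and then to glue the $p$-primary parts together subject to the global constraint $[K:\mathbb{Q}]=6$.

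First, I would restrict the set of primes $p$ that can appear in $|E(K)_{tors}|$. For any such prime there is a point $P$ of order $p$ in $E(K)$; its field of definition $\mathbb{Q}(P)$ sits inside $K$, so $[\mathbb{Q}(P):\mathbb{Q}]$ divides $6$. Combining the earlier determinations of $\Phi_{\mathbb{Q}}(d)$ for $d \in \{1,2,3\}$ (Mazur and Najman \cite{7}) with the computation of $R_{\mathbb{Q}}(d)$ in \cite{2}, only $p \in \{2,3,5,7\}$ is possible. This already accounts for the clause $m \ne 11$ and the absence of $C_{13}$, $C_{17}$, $C_{19}$, $C_{23}$, $C_{29}$, and all cyclic groups of larger prime order.

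Second, for each $p \in \{2,3,5,7\}$, I would pin down the possibilities for $E(K)[p^\infty]$ via three inputs: (a) the classifications of $\Phi_{\mathbb{Q}}(d)$ for $d \mid 6$, since every torsion point is defined over a subfield of $K$ of degree dividing $6$; (b) the Weil pairing, which requires $\mathbb{Q}(\zeta_{p^k}) \subseteq K$ whenever $C_{p^k} \oplus C_{p^k} \subseteq E(K)$; and (c) Mazur's isogeny theorem combined with an analysis of small-orbit cyclic subgroups of $E$ under $\Gal(K/\mathbb{Q})$, which bounds the cyclic $p$-part. For $p=5,7$ these constraints are quite rigid; for $p=2,3$ the surviving list is much longer and must be handled by explicit analysis of the mod-$p^k$ Galois representation and of rational points on modular curves of the form $X_1(N)$ and $X_1(m,n)$.

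Third, I would glue the $p$-primary pieces into the full torsion group. The essential global constraint is that if $C_n \subseteq E(K)$ arises from a point with minimal field of definition of degree $d_1$ and $C_m \subseteq E(K)$, with $\gcd(m,n)=1$, arises from one of degree $d_2$, then $\mathrm{lcm}(d_1,d_2)$ must divide $6$. This is what makes $C_{30} = C_2 \oplus C_3 \oplus C_5$ and $C_{21} = C_3 \oplus C_7$ feasible while ruling out, e.g., $C_{35}$ or $C_{22}$. Once the upper list is in hand, each group in the conclusion is realized by an explicit witness: a rational elliptic curve, a sextic field $K$, and a direct verification of $E(K)_{tors}$, obtained by a systematic LMFDB search together with division-polynomial computations.

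The main obstacle will be the non-existence proofs for the borderline groups that survive the easy necessary conditions: structures such as $C_2 \oplus C_{16}$, $C_4 \oplus C_8$, $C_2 \oplus C_{20}$, $C_3 \oplus C_{15}$, $C_5 \oplus C_5$, or $C_7 \oplus C_7$ must be ruled out via a mixture of finer image-of-Galois analysis and the classification of low-degree non-cuspidal, non-CM points on the relevant modular curves. The $2$-primary analysis in particular will absorb most of the work, since the $2$-adic image of $E/\mathbb{Q}$ can be small, the candidate list is longest, and the argument depends heavily on the existing literature on mod-$2^k$ representations of rational elliptic curves.
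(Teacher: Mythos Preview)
There is a concrete error in your first step. You assert that only $p\in\{2,3,5,7\}$ can divide $|E(K)_{tors}|$ and that this ``accounts for \dots\ the absence of $C_{13}$''. But $C_{13}$ is \emph{not} absent: it appears in the statement (the cyclic range is $m=1,\dots,16$ with $m\neq 11$), and more to the point the paper records $R_{\mathbb Q}(6)=\{2,3,5,7,13\}$ (Lemma~\ref{Lemma 2.7}), consistent with Najman's cubic classification (Theorem~\ref{Theorem 2.5}), where $C_{13}$ already occurs. So $p=13$ must be carried along, and you then have to eliminate $C_{169}$, $C_{26}$, $C_{39}$, $C_{65}$, $C_{91}$; the paper does this via Lemma~\ref{Lemma 3.2}, Lemma~\ref{Lemma 3.3}, and the rational isogeny classification (Theorem~\ref{Theorem 2.3}). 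Your reduction to $\{2,3,5,7\}$ short-circuits exactly these cases and is wrong as stated.

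Even setting aside the $p=13$ slip, your outline underestimates where the real work lies and misses the paper's two main engines. First, the pivotal lemma is not a $p$-primary classification but the isogeny constraint (Lemma~\ref{Lemma 3.3}): for non-CM $E/\mathbb Q$ and odd prime $p$, a $K$-rational point of order $p$ forces a rational $p$-isogeny (up to two explicit exceptions). This converts most mixed-prime obstructions into the finite list of rational isogeny degrees, and it is what actually kills $C_{35}$, $C_{39}$, $C_{65}$, $C_{91}$, and feeds the $j$-invariant enumerations for $C_{63}$, $C_{42}$, $C_{45}$, $C_2\oplus C_{30}$. Second, several of the hardest exclusions ($C_{36}$, $C_2\oplus C_{30}$, $C_6\oplus C_{12}$, parts of $C_3\oplus C_{18}$) are handled not by generic $X_1(m,n)$ arguments but by the generalized $S_3$-type/$\mathbb Q(3^{\infty})$ machinery of \cite{10}, by Brau--Jones entanglement \cite{28}, and by explicit rational-point computations on fibred modular curves. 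Your plan to ``classify each $E(K)[p^\infty]$ and glue by $\mathrm{lcm}$'' does not by itself produce these eliminations; you would still need the image-of-Galois case splits and the curve-by-curve Chabauty/rank computations that the paper carries out. Finally, note that $C_3\oplus C_{18}$ is left in the list precisely because the paper cannot fully exclude it (only under a $2$-adic hypothesis); your proposal treats realizability as a routine LMFDB search, but for this group no witness is known and no unconditional exclusion is available.
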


It is known that all groups mentioned in Theorem 1.1 except $C_3 \oplus C_{18}$ appear for some elliptic curve $E/\mathbb{Q}$ and some sextic field $K$. We expect that the group $C_3 \oplus C_{18}$ never occurs in this situation but we're unable to completely prove it. We give partial result regarding this group in the last theorem of this paper.
\\
\\
In order to prove Theorem 1.1, we heavily rely on results about possible images of Galois representations attached to $E$ by Zywina in \cite{3} (see also \cite{19})  and on possible values of $[\mathbb{Q}(P):\mathbb{Q}]$ (classified by Najman and González-Jim\'enez in \cite{2}), where $P \in E(\overline{\mathbb{Q}})$ is a point of prime order $p$. We will often have information on mod $p$ and mod $q$ Galois representations attached to $E$, where $p$ and $q$ are different prime numbers and we will clasify rational points on the corresponding modular curve.
\\
\\
In our computation, we used \texttt{Magma} \cite{35}. \texttt{Magma} \cite{35}  code used in this paper is available on this paper's $\mathrm{arXiv}$ webpage. Many functions we used were taken from Enrique Gonz\'alez-Jim\'enez's webpage. 
\section{Notation and auxiliary results}

\begin{theorem}[Mazur, \cite{11}]
\label{Theorem 2.1}
Let $E/\mathbb{Q}$ be an elliptic curve. Then

\[ E(\mathbb{Q})_{tors} \cong
\begin{cases} 
      C_m, & m=1,...,10, 12, \\
      C_{2} \oplus C_{2m}, & m=1,...,4.
   \end{cases} \]
\end{theorem}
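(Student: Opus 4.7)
The strategy is through modular curves. For each positive integer $N$, the modular curve $X_1(N)$ parametrizes pairs $(E,P)$ with $P$ a point of order $N$, so a non-cuspidal rational point on $X_1(N)$ produces an elliptic curve $E/\mathbb{Q}$ with a rational point of order $N$. More generally, $X_1(M,N)$ (with $M\mid N$) parametrizes pairs with full $C_M\oplus C_N$-torsion. The proof splits into two halves: (i) exhibiting the claimed groups as torsion subgroups, and (ii) ruling out everything else. The first half is routine: for $N\in\{1,\ldots,10,12\}$ and for $C_2\oplus C_{2m}$ with $m\le 4$, the relevant modular curve has genus zero and admits an explicit rational non-cuspidal point, hence infinitely many; standard Tate-normal-form computations produce universal families and verify realizability.

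The hard half is the exclusion of all remaining torsion structures. By a standard divisibility argument it suffices to rule out $p$-torsion rational points on $E/\mathbb{Q}$ for every prime $p\ge 11$, together with a short list of composite cases ($N=11$ itself, along with $14,15,16,18,20,21,24,25,27$ and $C_2\oplus C_{10}$, which get handled by their prime-power pieces). For $N=11$ the curve $X_1(11)$ is an elliptic curve of rank zero over $\mathbb{Q}$, so one enumerates its finitely many rational points and observes all of them are cusps. For each prime $p\ge 13$ one passes to the quotient $X_0(p)$, whose non-cuspidal rational points correspond to elliptic curves over $\mathbb{Q}$ admitting a rational $p$-isogeny; at this stage one invokes the deeper ingredient that a rational $p$-torsion point produces such an isogeny. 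One then studies the Jacobian $J_0(p)$, constructs its \emph{Eisenstein quotient} $\widetilde{J}_e$, shows this quotient has Mordell--Weil rank zero over $\mathbb{Q}$, and combines this with a \emph{formal immersion} argument at the cusp $\infty$: reducing mod a small prime $\ell$ of good reduction, the image of any rational point must coincide with the image of a cusp, and the formal immersion property upgrades this to a global equality. The residual non-cuspidal possibilities correspond to CM elliptic curves with a rational $p$-isogeny, which are finite in number and excluded by direct inspection of CM discriminants.

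The principal obstacle is the rank-zero statement for the Eisenstein quotient. Establishing it requires Mazur's analysis of the Eisenstein ideal inside the Hecke algebra acting on $J_0(p)$, the study of the associated finite flat group schemes and their Galois representations, and a link to special values of $L$-functions. Once this is in place, the formal immersion step is more formal but still requires careful control of the Néron model of $J_0(p)$, of the geometry of $X_0(p)$ at the cusps, and of the reduction at primes of bad reduction. The composite-order exclusions and the $C_2\oplus C_{2m}$ cases for $m\ge 5$ follow in a secondary Kamienny-style sweep once the prime cases are settled.
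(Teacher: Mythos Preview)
The paper does not prove this statement: Theorem~2.1 is stated purely as a background result, attributed to Mazur \cite{11}, and is used throughout without any indication of proof. There is therefore no ``paper's own proof'' to compare your proposal against.

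Your outline is a broadly faithful summary of Mazur's actual strategy in \cite{11} and \cite{12}: the reduction to modular curves $X_1(N)$, the realizability half via genus-zero parametrizations, and the exclusion of large prime orders via the Eisenstein quotient of $J_0(p)$ together with a formal-immersion argument. Two small historical/technical corrections: the composite-order cases ($N=14,15,16,18,20,21,24,25,27,\ldots$) were largely settled \emph{before} Mazur by Ogg, Kubert and others via direct genus and rational-point computations on the relevant $X_1(N)$, not by a ``Kamienny-style sweep'' (Kamienny's work postdates Mazur and concerns quadratic fields); and for the primes $p=11,13,17,19$ one also proceeds case-by-case on $X_1(p)$ or $X_0(p)$ (these have small genus and finite Mordell--Weil groups) rather than invoking the full Eisenstein-ideal machinery, which is really needed for $p\ge 23$. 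These are refinements of attribution rather than mathematical gaps in your sketch.
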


\begin{theorem}[Kenku, Momose, \cite{20}, Kamienny \cite{13}]
\label{Theorem 2.2}
Let $E/F$ be an elliptic curve over a quadratic number field $F$. Then 
\[ E(F)_{tors} \cong
\begin{cases} 
      C_m, & m=1,...,16,18, \\
      C_{2} \oplus C_{2m}, & m=1,...,6,\\
      C_3 \oplus C_{3m}, & m=1,2, \\
      C_4 \oplus C_4.
   \end{cases} \]
   
\end{theorem}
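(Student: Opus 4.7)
The plan is to split the classification into two complementary tasks: showing that no torsion group outside the list can occur, and exhibiting explicit elliptic curves realizing each listed group. The elimination task decomposes by the prime factors appearing in $\# E(F)_{tors}$, so I would begin by proving that only primes $p \in \{2,3,5,7,11,13\}$ can divide the order of a torsion point over a quadratic field. This is the content of Kamienny's refinement of Mazur's theorem. I would follow Mazur's formal immersion method: consider the modular curve $X_1(p)$ and its symmetric square $X_1(p)^{(2)}$, which parameterizes the pairs of conjugate points defined over quadratic fields, then embed $X_1(p)^{(2)}$ into a winding/Eisenstein quotient $J$ of the Jacobian $J_1(p)$. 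Since $J(\mathbb{Q})$ is provably finite for the relevant $p$, it suffices to check that the induced map $X_1(p)^{(2)} \to J$ is a formal immersion at a pair of cusps, which is a rank computation on cotangent spaces done via $q$-expansions of weight-$2$ cusp forms.

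Once the prime bound is in place, I would eliminate the forbidden prime powers $p^k$ not on the list. For each such $n$, I would work with $X_1(n)$: determine its gonality and genus, compute the Mordell-Weil group of its Jacobian over $\mathbb{Q}$, and apply either Chabauty-Coleman or a symmetric Mordell-Weil sieve on $X_1(n)^{(2)}$ to catalog all quadratic points and verify they are cuspidal. For the two-parameter families $C_m \oplus C_{mn}$, I would use the moduli curves $X_1(m,mn)$; here the Weil pairing is useful, since the presence of full $m$-torsion over $F$ forces $\mu_m \subset F$, which already restricts $m \le 2$ once $F$ is quadratic — this immediately cuts $C_4 \oplus C_{4k}$ down to $C_4 \oplus C_4$ and truncates the $C_3$-series.

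Realizing each surviving group reduces to writing down an elliptic curve with the prescribed torsion. For the cases where $X_1(n)$ (or $X_1(m,mn)$) has genus zero or one with positive Mordell-Weil rank, explicit parameterizations yield infinite families over $\mathbb{Q}$ or over fixed quadratic fields $\mathbb{Q}(\sqrt{d})$; the sporadic cases $C_{11}, C_{13}, C_{14}, C_{15}, C_{16}, C_{18}$ require exhibiting particular curves, which can be extracted from known models of $X_1(n)$.

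The main obstacle I anticipate is the formal immersion argument for the borderline primes $p = 11, 13$, and the genus-$>1$ analysis of $X_1(16)$ and $X_1(18)$, where the Jacobian no longer has rank zero over $\mathbb{Q}$ and the Chabauty-type computation becomes delicate. A secondary subtlety is ruling out $C_3 \oplus C_9$ and the higher $C_2 \oplus C_{2m}$ for $m \ge 7$: once the relevant modular curve is identified, verifying that its only quadratic points are cuspidal or $\mathrm{CM}$ points requires a careful descent on the Jacobian rather than a one-line argument.
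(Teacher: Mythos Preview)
The paper does not prove Theorem~2.2 at all: it is quoted as a classical result, with attribution to Kenku--Momose and Kamienny and citations to the original papers. There is therefore no ``paper's own proof'' to compare your proposal against; the theorem functions in this paper purely as an auxiliary input.

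That said, your outline is a reasonable high-level sketch of how the original arguments proceed (Mazur--Kamienny formal immersion on the symmetric square for the prime bound, followed by a case-by-case study of the modular curves $X_1(n)$ and $X_1(m,mn)$). One point is misstated: the Weil pairing gives $\mu_m \subset F$, which for quadratic $F$ forces $\varphi(m)\le 2$, i.e.\ $m\in\{1,2,3,4,6\}$, not $m\le 2$. So the pairing alone does not reduce $C_4\oplus C_{4k}$ to $k=1$ nor truncate the $C_3$-series at $C_3\oplus C_6$; those eliminations require analyzing $X_1(4,8)$ over $\mathbb{Q}(i)$ and $X_1(3,9)$ over $\mathbb{Q}(\zeta_3)$, which is additional work beyond the Weil-pairing restriction.
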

Let $E/F$ be an elliptic curve defined over a number field $F$. There exists an $F$-rational cyclic isogeny $\phi : E \to E'$ of degree $n$ if and only if $\ker\phi$ is a $Gal(\Bar{F}/F)$-
invariant cyclic group of order $n$; in this case we say that $E$ has an $F$-rational $n$-isogeny.
When $F=\mathbb{Q}$, possible degrees $n$ of elliptic curves over $\mathbb{Q}$ are known by the following theorem.
\begin{theorem}[Mazur \cite{12}, Kenku \cite{14}, \cite{15}, \cite{16}, \cite{17}]
\label{Theorem 2.3}
Let $E/\mathbb{Q}$ be an elliptic curve with a rational $n$-isogeny. Then \[ n \in \{ 1,...,19,21, 25,27, 37, 43, 67, 163 \} .\]
There are infinitely many elliptic curves (up to $\bar{\mathbb{Q}}$-isomorphism) with a rational $n$-isogeny over $\mathbb{Q}$ for \[ n \in \{
{1, . . . , 10, 12, 13, 16, 18, 25} \} \] and only finitely many for all the other $n$.
\end{theorem}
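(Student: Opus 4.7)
The plan is to translate the statement into a question about rational points on the modular curves $X_0(n)$. A $\mathbb{Q}$-rational cyclic $n$-isogeny $\phi : E \to E'$ corresponds bijectively to a non-cuspidal $\mathbb{Q}$-rational point on $X_0(n)$, the coarse moduli space of pairs $(E,C)$ with $C \subset E$ a cyclic subgroup of order $n$. Thus Theorem \ref{Theorem 2.3} reduces to determining, for each $n$, whether $X_0(n)(\mathbb{Q})$ contains non-cuspidal points, and if so whether there are infinitely many up to $\bar{\mathbb{Q}}$-isomorphism of $E$.

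First I would dispose of the positive cases. For the genus-$0$ values $n \in \{1,\ldots,10,12,13,16,18,25\}$, the curve $X_0(n)$ carries a rational cusp and hence is isomorphic to $\mathbb{P}^1_{\mathbb{Q}}$, yielding an infinite $1$-parameter family of non-isomorphic elliptic curves with a rational $n$-isogeny. For the genus-$1$ values (such as $n = 11, 14, 15, 17, 19, 20, 21, 24, 27, 32, 36, 49$), $X_0(n)$ is an elliptic curve over $\mathbb{Q}$ of rank $0$; one computes its Mordell--Weil group and checks that every rational point is either a cusp or a CM point, giving only finitely many $j$-invariants. This accounts for all $n$ on the listed side.

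The hard part is excluding all remaining $n$. For prime $n = p$ this is Mazur's theorem: $X_0(p)(\mathbb{Q})$ consists only of cusps and CM points for $p \geq 23$, with CM points occurring exactly for $p \in \{37,43,67,163\}$. Following \cite{12}, the strategy is to study the Eisenstein quotient $\tilde{J}$ of $J_0(p)$, prove that $\tilde{J}(\mathbb{Q})$ is finite, and verify that the composition
\[
X_0(p)(\mathbb{Q}) \hookrightarrow J_0(p)(\mathbb{Q}) \twoheadrightarrow \tilde{J}(\mathbb{Q})
\]
is a formal immersion at a rational cusp. This would force any hypothetical non-cuspidal rational point to reduce to a cusp modulo every prime of good reduction, which is impossible outside the exceptional list. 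Controlling the Eisenstein ideal $\mathfrak{m} = (T_\ell - \ell - 1 : \ell \ne p)$ and bounding $\tilde{J}(\mathbb{Q})$ is the central technical obstacle.

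The composite cases are reduced to the prime case by Kenku's argument. If $n = p^a m$ with $\gcd(p,m) = 1$, then a rational cyclic $n$-isogeny produces simultaneous rational cyclic $p^a$- and $m$-isogenies, hence a rational point on the fibre product $X_0(p^a) \times_{X(1)} X_0(m)$. Combining Mazur's result for primes with the projections $X_0(n) \to X_0(d)$ for $d \mid n$ and explicit Mordell--Weil or Chabauty computations on the relevant small-level modular curves and their quotients, one eliminates every $n$ outside the stated set. The remaining values $n \in \{37, 43, 67, 163\}$ and the exceptional $n = 21, 27$ are kept, and one verifies that in each of these cases only finitely many $\bar{\mathbb{Q}}$-isomorphism classes arise, matching the assertion.
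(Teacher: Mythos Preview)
The paper does not prove Theorem~\ref{Theorem 2.3} at all: it is quoted as a classical background result due to Mazur and Kenku, with citations to \cite{12}, \cite{14}, \cite{15}, \cite{16}, \cite{17}, and no argument is given beyond those references. So there is no ``paper's own proof'' to compare against.

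That said, your sketch is a faithful outline of how the result is actually established in the cited literature: the reduction to rational points on $X_0(n)$, Mazur's Eisenstein-quotient argument for prime level, the formal-immersion criterion, and Kenku's case-by-case treatment of composite $n$ via fibre products and explicit computations on low-genus modular curves. One small inaccuracy: your list of genus-$1$ levels is not quite right (for instance $X_0(24)$ and $X_0(32)$ have genus $1$, but $X_0(37)$ and $X_0(43)$ have genus $\ge 2$, and the non-cuspidal rational points for $n=37,43,67,163$ are found by other means, not by a rank-$0$ elliptic-curve computation). Also, the phrase ``CM points'' is a bit loose for the exceptional primes: the non-cuspidal rational point on $X_0(37)$ corresponds to a non-CM curve. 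But these are minor slips in an otherwise accurate high-level summary; for the purposes of this paper, simply citing the references as the author does is the appropriate course.
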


\begin{theorem} [Najman, \cite{7}, Theorem 2]
\label{Theorem 2.4}
Let $E/\mathbb{Q}$ be an elliptic curve and $F$ a quadratic field. Then 
\[ E(F)_{tors} \cong
\begin{cases} 
      C_m, & m=1,...,10,12,15,16, \\
      C_{2} \oplus C_{2m}, & m=1,...,6,\\
      C_3 \oplus C_{3m}, & m=1,2, \\
      C_4 \oplus C_4.
   \end{cases} \]
\end{theorem}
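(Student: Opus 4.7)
The plan is to start from Theorem~\ref{Theorem 2.2} and eliminate the groups $C_{11}, C_{13}, C_{14}, C_{17}, C_{18}$, which appear in the quadratic list but are claimed not to arise when $E$ is defined over $\mathbb{Q}$. The central tool is the quadratic twist decomposition of torsion: if $F = \mathbb{Q}(\sqrt{d})$ and $E^d$ denotes the quadratic twist of $E$ by $d$, then for every odd prime $\ell$ there is an isomorphism of abelian groups
\[
E(F)[\ell^\infty] \;\cong\; E(\mathbb{Q})[\ell^\infty] \;\oplus\; E^d(\mathbb{Q})[\ell^\infty],
\]
arising from the $(\pm 1)$-eigenspace decomposition under the nontrivial element of $\Gal(F/\mathbb{Q})$ together with the identification of the $-1$-eigenspace with $E^d(\mathbb{Q})[\ell^\infty]$ via the twist.

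First I dispose of the cyclic groups $C_p$ with $p \in \{11, 13, 17\}$: a point of such a prime order in $E(F)$ would, by the decomposition, belong to either $E(\mathbb{Q})$ or $E^d(\mathbb{Q})$, contradicting Theorem~\ref{Theorem 2.1} in either case.

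Next I treat $C_{14}$ and $C_{18}$. Suppose $E(F)_{tors} \supseteq C_{2m}$ with $m \in \{7,9\}$. Applying the decomposition at the prime $\ell$ dividing $m$, one of the two summands must already contain a cyclic subgroup of order $m$ (since any element of order $m$ in a direct sum of abelian $\ell$-groups lifts a generator in one factor). Because $E$ and $E^d$ become isomorphic over $F$, the hypothesis $E(F)_{tors} \supseteq C_{2m}$ passes to $E^d$, so we may replace $E$ by $E^d$ if necessary to assume $E(\mathbb{Q})_{tors} \supseteq C_m$. By Theorem~\ref{Theorem 2.1} this forces $E(\mathbb{Q})_{tors} = C_m$, and in particular $E$ has no rational $2$-torsion point. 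Therefore the $2$-division polynomial of $E$ is an irreducible cubic over $\mathbb{Q}$ and $[\mathbb{Q}(E[2]):\mathbb{Q}] \in \{3,6\}$. A direct check using the $\Gal(\mathbb{Q}(E[2])/\mathbb{Q})$-action on $E[2]$ shows that the stabilizer of any nonzero element of $E[2]$ corresponds only to a cubic subfield, so no nontrivial $2$-torsion point of $E$ is defined over any quadratic extension of $\mathbb{Q}$. This contradicts $E(F)[2] \neq 0$, which is forced by $C_{2m} \subseteq E(F)_{tors}$.

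The main obstacle, once this framework is in place, is a clean setup of the twist identification in the $-1$-eigenspace: this is a standard cocycle manipulation, but must be invoked only at odd primes, since the analogous statement genuinely fails at $\ell = 2$. It is essential that the argument above uses the decomposition only at $\ell \in \{3,7\}$ and handles the $2$-primary behaviour by the elementary division-polynomial step. Realizability of each group in the stated list is a separate, constructive matter and follows from exhibiting explicit pairs $(E/\mathbb{Q}, F)$, which is well documented in the literature.
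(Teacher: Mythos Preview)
The paper does not prove Theorem~\ref{Theorem 2.4}; it is quoted verbatim from Najman's paper \cite{7} as an auxiliary result, so there is no in-paper proof to compare against. Your argument is essentially the standard one and is correct in outline: the quadratic-twist decomposition of the odd torsion kills $C_{11}$ and $C_{13}$ immediately via Mazur, and for $C_{14}$ and $C_{18}$ your reduction to $E(\mathbb{Q})_{tors}=C_7$ or $C_9$ followed by the observation that the $2$-division cubic is then irreducible (so every nonzero $2$-torsion point lives over a cubic, never a quadratic, extension) is exactly right.

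One small slip: you list $C_{17}$ among the groups to be eliminated, but $C_{17}$ already does not appear in Theorem~\ref{Theorem 2.2}, so there is nothing to do there. The groups genuinely needing elimination from the Kenku--Momose--Kamienny list are precisely $C_{11}$, $C_{13}$, $C_{14}$, and $C_{18}$. Otherwise your write-up is sound; in particular you are careful to use the eigenspace decomposition only at odd primes and to handle the $2$-part by a direct field-degree argument, which is the correct way to avoid the well-known failure of the twist isomorphism at~$2$.
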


\begin{theorem} [Najman, \cite{7}, Theorem 1]
\label{Theorem 2.5}
Let $E/\mathbb{Q}$ be an elliptic curve and $K$ a cubic field. Then 
\[ E(K)_{tors} \cong
\begin{cases} 
      C_m, & m=1,...,10, 12, 13, 14, 18, 21, \\
      C_{2} \oplus C_{2m}, & m=1,...,4,7.
   \end{cases} \]

\end{theorem}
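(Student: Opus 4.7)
The plan is to combine Mazur's Theorem~\ref{Theorem 2.1}, the isogeny classification of Theorem~\ref{Theorem 2.3}, and a careful Galois-orbit analysis. Since $[K:\mathbb{Q}]=3$ is prime, any $P\in E(K)_{tors}$ satisfies $[\mathbb{Q}(P):\mathbb{Q}]\in\{1,3\}$, so $P$ is either $\mathbb{Q}$-rational (with order bounded by Mazur) or sits in a Galois orbit of exactly three points. In the latter case, if the three conjugates all lie inside $\langle P\rangle$---which is automatic whenever $P$ generates the $p$-primary part of $E(K)_{tors}$---then $\langle P\rangle$ is $\Gal(\bar{\mathbb{Q}}/\mathbb{Q})$-stable, yielding a cyclic $n$-isogeny on $E$ defined over $\mathbb{Q}$ and restricting $n$ by Theorem~\ref{Theorem 2.3}.

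My first step is to determine the primes $p$ for which $E(K)$ can contain a point of order $p$. For $p\ge 5$, a point of order $p$ defined over a field of degree $\le 3$ forces the mod-$p$ image of Galois to lie in a proper subgroup of $\Gl_2(\mathbb{F}_p)$ whose restriction to the index-$\le 3$ stabilizer of $P$ has a fixed vector. Running through the possible maximal subgroups (Borel, normaliser of split or non-split Cartan, exceptional), only the Borel case survives once $p\ge 11$, producing a rational $p$-isogeny and hence $p\in\{11,13,17,19,37,43,67,163\}$ by Theorem~\ref{Theorem 2.3}. Each such prime is then excluded or confirmed by analysing cubic points on $X_1(p)$ with rational $j$-invariant: $p=13$ yields $C_{13}$ and, when coupled with a rational $3$-isogeny, $C_{21}$; the others are ruled out through rank or Mordell--Weil sieve arguments on the relevant $X_1(p)$.

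Next I would bootstrap from primes to prime powers to composite orders. For each candidate group $G$ containing a subgroup from Mazur's list, whose primary decomposition is compatible with the rational isogeny structures just identified, I identify the modular curve $X$ parametrising pairs $(E,G)$ with the prescribed cubic-field torsion and ask whether $X$ has a non-cuspidal cubic point with rational $j$-invariant. The relevant curves $X_1(N)$ and $X_1(M,N)$ are of manageable genus, and much of the requisite analysis already appears in the surrounding literature. Existence of each group in the final list is then confirmed by exhibiting explicit base-changes of curves from Cremona's tables to appropriate cubic fields, with \texttt{Magma} computing the torsion subgroup.

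The main obstacle will be ruling out the borderline candidates $C_{11}$, $C_{15}$, $C_{16}$, $C_{20}$, $C_{22}$, $C_{24}$, $C_2\oplus C_{10}$, and $C_2\oplus C_{12}$: each exclusion reduces to showing that a specific modular curve has no non-cuspidal cubic point coming from a rational $j$-invariant. This is delicate because several of these groups do appear in the larger set $\Phi(3)$---so counterexamples exist as soon as $j(E)$ is allowed to be irrational---and the rationality of $j(E)$ must be exploited systematically. The hardest individual case is likely $C_{11}$: $X_1(11)$ has genus $1$ and Mordell--Weil rank $0$, and one must enumerate its cubic points and argue that none corresponds to a $\mathbb{Q}$-curve.
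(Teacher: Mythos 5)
This statement is not proved in the paper at all: it is imported verbatim as Najman's Theorem~1 from \cite{7}, so there is no internal proof to compare against, and your proposal has to be judged as a reconstruction of Najman's argument. As such it has a genuine gap at its very first reduction. You claim that if $P$ generates the $p$-primary part of $E(K)_{tors}$ then the three Galois conjugates of $P$ ``automatically'' lie in $\langle P\rangle$, so that $\langle P\rangle$ is Galois-stable and $E$ acquires a rational cyclic isogeny. This is false: $K$ is a cubic field that need not be Galois, so the conjugates of $P$ live in the conjugate fields of $K$, not in $E(K)$, and there is no reason for them to lie in $\langle P\rangle$. The case $p=2$ is an explicit counterexample: if the $2$-division polynomial of $E$ is irreducible with Galois group $S_3$, then $E$ has a point of order $2$ over a cubic field generating the full $2$-primary part of $E(K)_{tors}$, yet $E$ has no rational $2$-isogeny. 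Even for odd $p$ the implication ``point of order $p$ over a cubic field $\Rightarrow$ rational $p$-isogeny'' is a nontrivial theorem obtained by running orbit--stabilizer through the classification of subgroups of $\Gl_2(\mathbb{F}_p)$ with surjective determinant and a vector orbit of length dividing $3$ (compare Lemma~\ref{Lemma 3.3} of this paper, where the analogous statement for sextic fields already admits exceptions at $p=7$). Since everything downstream in your plan leans on the isogeny classification of Theorem~\ref{Theorem 2.3}, this step must be repaired before the rest can stand.

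Two further points where the plan would not survive execution as written. First, you propose to rule out $C_{11}$ by ``enumerating the cubic points'' of $X_1(11)$; but $X_1(11)$ is a genus-one curve and has infinitely many cubic points (it acquires positive rank over infinitely many cubic fields), so no such enumeration exists --- the actual exclusion of $11$ is the easy Galois-image computation showing $11\notin R_{\mathbb{Q}}(3)$, i.e.\ the minimal degree of a point of order $11$ on a curve with a rational $11$-isogeny is $5$. Second, you misplace the difficulty: the delicate parts of Najman's proof are not $C_{11}$ but the positive side --- in particular the sporadic cubic point on $X_1(21)$ giving $E(K)_{tors}\cong C_{21}$ for exactly one $\overline{\mathbb{Q}}$-isomorphism class (the curve $162\mathrm{b}1$ over $\mathbb{Q}(\zeta_9)^{+}$), together with exhibiting $C_{18}$ and $C_2\oplus C_{14}$ --- and the determination of cubic points with rational $j$-invariant on higher-genus curves such as $X_1(16)$, $X_1(20)$, $X_1(2,10)$ and $X_1(2,12)$, which requires curve-by-curve work well beyond the generic ``rank or Mordell--Weil sieve arguments'' you invoke. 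The overall architecture (prime orders first, then prime powers and composites via modular curves, with $j\in\mathbb{Q}$ exploited throughout) is the right one, but the proposal as it stands does not constitute a proof.
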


\begin{theorem}[\cite{30}]
Let $E/K$ be an elliptic curve with $\mathrm{CM}$ and let $K$ be a sextic number field. Then 
\[ E(K)_{tors} \cong
\begin{cases} 
      C_m, & m=1, 2, 3, 4, 6, 7, 9, 10, 14, 18, 19, 26, \\
      C_{2} \oplus C_{2m}, & m=2, 4, 6, 7, \\
      C_{3} \oplus C_{3m}, & m=1, 2, 3, \\
      C_6 \oplus C_6.
   \end{cases} \]
\end{theorem}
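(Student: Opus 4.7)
The plan is to exploit the unusually rigid Galois representations attached to CM elliptic curves. For $E/K$ with $\mathrm{CM}$ by an order $\mathcal{O}$ in an imaginary quadratic field $F$, the mod $n$ representation $\bar\rho_{E,n}\colon \Gal(\bar{K}/K)\to \Gl_2(\mathbb{Z}/n\mathbb{Z})$ factors through the normalizer of the Cartan subgroup $(\mathcal{O}/n\mathcal{O})^\times$, and lies inside the Cartan itself precisely when $F\subseteq K$. These normalizers are extremely small compared to the ambient $\Gl_2(\mathbb{Z}/n\mathbb{Z})$, and translate into sharp lower bounds for the degree of the field of definition of torsion points. With $[K:\mathbb{Q}]=6$ the resulting inequalities become very restrictive, and the whole classification reduces to elementary, if elaborate, group theory over $\mathcal{O}/n\mathcal{O}$.

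First I would determine which primes $p$ can divide $|E(K)_{tors}|$. For each prime one compares the minimal index of an orbit of the Cartan normalizer on $(\mathcal{O}/p\mathcal{O})^2\setminus\{0\}$ against $6$, handling the split, inert, and ramified cases separately; this quickly leaves only $p\in\{2,3,5,7,13,19\}$, with the large primes $13$ and $19$ arising only from a handful of the thirteen class-number-one rational CM $j$-invariants. Next I would bound each $p$-adic torsion by describing $E[p^n]$ as a free rank-one $\mathcal{O}/p^n$-module and tracking the resulting isogeny characters on $K$. Combined with the Weil-pairing constraint that $\mathbb{Q}(\zeta_m)\subseteq K$ whenever $C_m\oplus C_m\hookrightarrow E(K)$---which forces $\phi(m)\mid 6$ and hence $m\in\{1,2,3,4,6,7,9,14,18\}$---this yields a short, explicit list of candidate groups. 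For each survivor I would then either exhibit a realization by base-changing one of the thirteen rational CM curves to a sextic field generated by a suitable torsion coordinate, or obtain a direct contradiction from the character analysis.

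The hard part is the realization step for the sporadic groups, in particular $C_{19}$ and $C_{26}$: one must pin down the exact CM $j$-invariant producing each, verify that the relevant division field has a sextic subfield over which the required torsion is rational, and simultaneously rule out spurious mixed-prime combinations in which the $2$-adic and odd-adic representations would force $K$ to contain two incompatible subfields of degree dividing $6$. Navigating the interplay between the $\mathcal{O}$-scalar part, the Cartan character, and the anticyclotomic component of the normalizer---separately for each of the thirteen rational CM $j$-invariants and then for the non-maximal orders contributing the groups $C_3\oplus C_9$ and $C_6\oplus C_6$---is where the bulk of the case analysis will live.
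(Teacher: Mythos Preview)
The paper does not prove this theorem; it is stated as a citation of Clark--Corn--Rice--Stankewicz \cite{30} and is used only as input (in Theorem~3.1) to deal with the CM case of Theorem~\ref{Theorem 1.1}. So there is no ``paper's own proof'' to compare against. For context, the argument in \cite{30} is largely computational: one enumerates all imaginary quadratic orders $\mathcal{O}$ whose ring class field has degree at most $d$ over $\mathbb{Q}$ (hence all CM $j$-invariants of degree $\le d$), and then, for each resulting curve, computes the torsion directly over the relevant extensions.

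Your outline is a legitimate alternative, more structural approach via the Cartan/normalizer description of $\bar\rho_{E,n}$, and the prime-by-prime elimination you sketch is exactly how one obtains sharp degree bounds in the CM setting. One point to be careful about: the theorem concerns $E/K$ with $K$ sextic, not $E/\mathbb{Q}$, so restricting the case analysis to ``the thirteen rational CM $j$-invariants'' is not enough. You must allow $j(E)$ to lie in any subfield of $K$, i.e.\ consider all imaginary quadratic orders $\mathcal{O}$ with $h(\mathcal{O})\mid 6$ (there are several dozen). Your abstract Cartan arguments do not depend on $j(E)\in\mathbb{Q}$, so the elimination half survives, but both the realization step and the ``sporadic'' analysis for $C_{19}$ and $C_{26}$ must be carried out over this larger list of orders, not just the thirteen class-number-one ones.
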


In \cite[Table 1]{1} it was shown that $\Phi_{\mathbb{Q}}(6)$ contains the following groups:
\begin{align*}
    C_{30}, \hspace{0.5cm}C_{2} \oplus C_{18}, \hspace{0.5cm}C_{3} \oplus C_{9}, \hspace{0.5cm}C_{3} \oplus C_{12}, \hspace{0.5cm}C_{4} \oplus C_{12}, \hspace{0.5cm}C_{6} \oplus C_{6}.
\end{align*}
Since $\Phi_{\mathbb{Q}}(6) \supseteq \Phi_{\mathbb{Q}}(2), \Phi_{\mathbb{Q}}(3)$, we have
\begin{align*}
    \Phi_{\mathbb{Q}}(6) \supseteq \Phi_{\mathbb{Q}}(2) \cup \Phi_{\mathbb{Q}}(3) \cup \{ C_{30}, C_{2} \oplus C_{18}, C_{3} \oplus C_{9}, C_{3} \oplus C_{12}, C_{4} \oplus C_{12}, C_{6} \oplus C_{6}. \}
\end{align*}

\begin{lemma}[\cite{2}, Section 5]
\label{Lemma 2.7}
The set $R_{\mathbb{Q}}(6)$ equals $\{ 2, 3, 5, 7, 13 \}.$
\end{lemma}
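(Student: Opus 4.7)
\emph{Plan.} The lemma splits into the two inclusions $\{2,3,5,7,13\} \subseteq R_{\mathbb{Q}}(6)$ and $R_{\mathbb{Q}}(6) \subseteq \{2,3,5,7,13\}$.

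For the inclusion $\supseteq$, the primes $2,3,5,7$ are immediate from Mazur's theorem (Theorem~\ref{Theorem 2.1}), since there exist rational elliptic curves with torsion of order $2, 3, 5, 7$ already over $\mathbb{Q}$, hence over any sextic field. For $p=13$, the plan is to exhibit a rational elliptic curve $E$ admitting a $\mathbb{Q}$-rational $13$-isogeny $\phi : E \to E'$; such $E$ exist in abundance since $X_0(13)$ has genus zero. The Galois action on $\ker\phi$ is given by a character $\chi : G_\mathbb{Q} \to \mathbb{F}_{13}^\times \cong C_{12}$, and a generator of $\ker\phi$ is defined over the cyclic extension of degree $\mathrm{ord}(\chi)$ cut out by $\chi$. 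Since $6 \mid 12$, one aims to produce, by explicit parametrisation and quadratic twisting, a curve in the family for which $\mathrm{ord}(\chi) = 6$.

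For the inclusion $\subseteq$, suppose $E/\mathbb{Q}$ has a point $P$ of prime order $p$ with $[\mathbb{Q}(P):\mathbb{Q}] \mid 6$. Then the Galois orbit of the line $\langle P \rangle \subset E[p]$ has size dividing $6$, so $(E, \langle P \rangle)$ yields a non-cuspidal $\mathbb{Q}$-point on $X_0(p)$ of degree at most $6$. For primes $p$ outside the Mazur list of Theorem~\ref{Theorem 2.3}, one combines bounds on the $\mathbb{Q}$-gonality of $X_0(p)$ with results on sporadic low-degree points to conclude that no such configuration exists, forcing $p \in \{2,3,5,7,11,13,17,19,37,43,67,163\}$.

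The main obstacle is the finite list $p \in \{11, 17, 19, 37, 43, 67, 163\}$. For each such $p$ one enumerates the finitely many $\overline{\mathbb{Q}}$-isomorphism classes of $E/\mathbb{Q}$ admitting a rational $p$-isogeny via the explicit rational points on $X_0(p)$, and studies their quadratic twists. The kernel character $\chi$ has order dividing $p-1$, and a generator of $\ker\phi$ lies in a field of degree $\mathrm{ord}(\chi)$, so we need $\mathrm{ord}(\chi) \in \{1,2,3,6\}$. For $p \in \{11, 17\}$, $6 \nmid p-1$, so only $\mathrm{ord}(\chi) \in \{1,2\}$ is compatible; these are ruled out by Theorems~\ref{Theorem 2.1} and~\ref{Theorem 2.4} respectively, as they would produce $p$-torsion over $\mathbb{Q}$ or over a quadratic field. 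For $p \in \{19, 37, 43, 67, 163\}$, where $6 \mid p-1$, order $3$ is excluded by Theorem~\ref{Theorem 2.5}, and ruling out $\mathrm{ord}(\chi) = 6$ requires a direct case-by-case verification on each of the finitely many $j$-invariants (and their quadratic twists) admitting a rational $p$-isogeny. This last verification, carried out via the explicit models of $X_0(p)$ and the Galois characters attached to their rational points, is the technical heart of the argument.
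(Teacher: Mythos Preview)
The paper does not prove this lemma at all: it is quoted verbatim from \cite[Section~5]{2} (Gonz\'alez-Jim\'enez--Najman) as an input, with no argument given. So there is no ``paper's own proof'' to match; the relevant comparison is with the method of \cite{2}.

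Your overall two-inclusion shape is fine, and the treatment of $p\in\{2,3,5,7,13\}$ and of $p\in\{11,17\}$ via the isogeny character is reasonable. However, the reduction step for $p\notin\{2,3,5,7,11,13,17,19,37,43,67,163\}$ is where your plan diverges from \cite{2} and also where it has a genuine gap. You write that $(E,\langle P\rangle)$ gives a point of degree at most~$6$ on $X_0(p)$ and then invoke ``bounds on the $\mathbb{Q}$-gonality of $X_0(p)$'' together with results on sporadic low-degree points. Gonality only controls the existence of \emph{infinitely many} low-degree points; it does not by itself rule out finitely many, and in any case the extra constraint here is much stronger than ``degree $\le 6$'': the $j$-invariant is rational, since $E/\mathbb{Q}$. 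Gonality is simply the wrong lever. The argument in \cite{2} does not pass through $X_0(p)$ at all: it uses the classification of possible images $G_E(p)\subseteq\Gl_2(\mathbb{F}_p)$ for $E/\mathbb{Q}$ (Zywina \cite{3}, Sutherland \cite{19}) and, for each image, computes the orbit lengths on nonzero vectors of $\mathbb{F}_p^2$, which are exactly the possible values of $[\mathbb{Q}(P):\mathbb{Q}]$. This simultaneously handles the reduction to a rational $p$-isogeny (the Borel case) and the exceptional non-Borel images, including the CM situation, which your plan does not address separately. (Compare also Lemma~\ref{Lemma 3.3}, which is the sextic instance of exactly this orbit analysis and already shows that the ``rational $p$-isogeny'' reduction has exceptions at $p=7$.)

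A second, smaller gap: even once you are in the Borel case, you analyse only the kernel character $\chi$ of the distinguished isogeny. You should also argue that for $p\ge 11$ no point of order $p$ \emph{outside} the isogeny kernel can lie in a sextic field; this follows from the Borel orbit structure on $\mathbb{F}_p^2$ (the other orbits have length divisible by~$p$), but it needs to be said. With these two points repaired --- replacing the gonality step by the Galois-image/orbit computation of \cite{2}, and closing off the non-kernel points --- your outline would become a correct proof, albeit by a route essentially equivalent to that of \cite{2}.
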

By the previous lemma, we only need to consider groups $C_m \oplus C_{n}$ such that the prime factors of $n$ are in $R_{\mathbb{Q}}(6)$.
\subsection*{Galois representations attached to elliptic curves}
Let $E/\mathbb{Q}$ be an elliptic curve and $n$ a positive integer. We denote by $E[n]$ the $n$-torsion subgroup of $E(\overline{\mathbb{Q}})$. The field $\mathbb{Q}(E[n])$ is the number field obtained by adjoining to $\mathbb{Q}$ all the $x$ and $y$-coordinates of the points of $E[n]$. The absolute Galois group $Gal(\overline{\mathbb{Q}}/\mathbb{Q})$ acts on $E[n]$ by its action on the coordinates of the
points, inducing a mod $n$ Galois representation attached to $E$:
\[ \rho_{E,n}: \Gal(\overline{\mathbb{Q}}/\mathbb{Q}) \to \Aut(E[n]) .\]
After fixing a base for the $n$-torsion, we identify $\Aut(E[n])$ with $\Gl_{2}(\mathbb{Z}/n\mathbb{Z})$.
This means that we can consider $\rho_{E,n}(\Gal(\overline{\mathbb{Q}}/\mathbb{Q}))$ as a subgroup of $\Gl_{2}(\mathbb{Z}/n\mathbb{Z})$, uniquely determined up to conjugacy. We shall denote $\rho_{E,n}(\Gal(\overline{\mathbb{Q}}/\mathbb{Q}))$ by $G_{E}(n)$. Moreover, since $\mathbb{Q}(E[n])$ is Galois extension of $\mathbb{Q}$ and $\ker \rho_{E,n}= \Gal(\overline{\mathbb{Q}}/\mathbb{Q}(E[n]))$, by the first isomorphism theorem we have $G_{E}(n) \cong \Gal(\mathbb{Q}(E[n])/\mathbb{Q})$.
\\
Rouse and Zureick-Brown \cite{31} have classified all the possible $2$-adic images of $\rho_{E,2^{\infty}}: \Gal(\overline{\mathbb{Q}}/\mathbb{Q}) \to \Gl_{2}(\mathbb{Z}_2)$, and have given explicitly all the $1208$ possibilities. We will use the same notation as in \cite{31} for the $2$-adic image of a given elliptic curve $E/\mathbb{Q}$. In \cite{5}, Gonz\'alez-Jim\'enez and Lozano-Robledo have determined for each possible image the degree of the field of definition of any $2$-subgroup.
From the results of \cite{36} one can see if a given $2$-subgroup is defined over a number field of given degree $d$. 

\subsection*{Division polynomial method}
 $E/\mathbb{Q}$ be an elliptic curve and $n$ a positive integer. We denote by $\psi_{E,n}$ the $n$-th division polynomial of $E$ (see \cite[Section 3.2]{33}). If $n$ is odd, then the roots of $\psi_{E, n}$ are precisely the $x$-coordinates of points $P \in E[n]$. Similarly, if $n$ is even, then the roots of $\psi_{E, n}/\psi_{E,2}$ are precisely the $x$-coordinates of points $P \in E[n]\setminus E[2]$.  $f_{E,n}$ denote the corresponding primitive division polynomial associated to $E$, i.e. it's roots are the $x$-coordinates of points $P$ on $E(\overline{\mathbb{Q}})$ of exact order $n$. We briefly describe the construction of $f_{E,n}$. If $n=p$ is prime, then $f_{E,p}=\psi_{E,p}$. For an arbitrary $n$, we have \[  f_{E,n}=\frac{\psi_{E,n}}{\displaystyle \prod_{d|n, d \ne n}f_{E,d}}.\] Note that if $E^{d}/\mathbb{Q}$ is a quadratic twist of $E/\mathbb{Q}$, then $\psi_{E,n}=\alpha \psi_{E^{d},n}$ and $f_{E,n}=\beta f_{E^d,n}$, for some rational constants $\alpha$, $\beta$. Consider the following problem:
\\
\\
\textit{Given a rational number $j$ and K a number field of degree $d$, does there exist an elliptic curve $E/\mathbb{Q}$ such that $j=j(E)$ can $E(K)$ contains a point $P$ of exact order $n$?}
\\
\\
 $E_{0}/\mathbb{Q}$ be any elliptic curve with $j=j(E_{0})$. In \texttt{Magma} \cite{35}, we compute primitive division polynomial $f_{E_0,n}$. Since every elliptic curve $E/\mathbb{Q}$ with $j(E)=j$ is a quadratic twist of $E_0$, we have $f_{E_0,n}=\beta f_{E,n}$, for some rational number $\beta$. Next, we factor $f_{E_0,n}$ over $\mathbb{Q}[x]$.  $d'$ denote the degree of the smallest irreducible factor $f$ of $f_{E_0,n}$ and  $x_0$ be a root of $f$. If $d' >d$, then $[\mathbb{Q}(P):\mathbb{Q}] \ge [\mathbb{Q}(x_0):\mathbb{Q}]=d' > d=[K:\mathbb{Q}]$ and so a point $P$ of exact order $n$ on $E(\overline{\mathbb{Q}})$ can't be defined over $K$.
\\
\\
Notation. Specific elliptic curves mentioned in this paper will be referred to by their $\mathrm{LMFDB}$ label and a link to
the corresponding $\mathrm{LMFDB}$ page \cite{18} will be included for the ease of the reader. Conjugacy classes of subgroups
of $\Gl_{2}(\mathbb{Z}/p\mathbb{Z})$ will be referred to by the labels introduced by Sutherland in \cite{19}.

\section{Classification of $\Phi_{\mathbb{Q}}(6)$}
In this section, we prove Theorem \ref{Theorem 1.1}.
From now on,  $K$ denote degree $6$ extension of $\mathbb{Q}$.
\begin{theorem}
 $E/\mathbb{Q}$ be an elliptic curve with $\mathrm{CM}$. Then $E(K)_{tors}$ is one of the groups listed in Theorem 1.1.
\end{theorem}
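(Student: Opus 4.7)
The plan is to bootstrap from Theorem 2.6, which classifies $E(K)_{tors}$ for every CM elliptic curve defined over any sextic field $K$ (without requiring $E$ to be defined over $\mathbb{Q}$). Comparing the list in Theorem 2.6 with the target list in Theorem 1.1, every group appearing in Theorem 2.6 is already contained in Theorem 1.1, \emph{except} $C_{19}$ and $C_{26}$. So it suffices to rule out these two possibilities under the additional hypothesis that $E$ is defined over $\mathbb{Q}$.

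The case $C_{19}$ is immediate from Lemma 2.7: since $19 \notin R_{\mathbb{Q}}(6) = \{2,3,5,7,13\}$, no elliptic curve $E/\mathbb{Q}$ can acquire a point of order $19$ over any sextic field $K$. In particular $E(K)_{tors} \cong C_{19}$ is impossible.

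For $C_{26}$, I would use the division polynomial method outlined in Section 2. There are exactly $13$ CM $j$-invariants $j \in \mathbb{Q}$, coming from the thirteen imaginary quadratic orders of class number one. For each such $j$, fix a representative $E_{0}/\mathbb{Q}$ with $j(E_{0})=j$ and compute in \texttt{Magma} the primitive $26$-th division polynomial $f_{E_{0},26} \in \mathbb{Q}[x]$. Every $E/\mathbb{Q}$ with $j(E)=j$ is a quadratic twist of $E_{0}$, and as noted in the paper $f_{E_{0},26}$ and $f_{E^{d},26}$ differ only by a rational scalar, so their factorisations over $\mathbb{Q}[x]$ have the same degree pattern. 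It therefore suffices to verify, for each of the $13$ CM $j$-invariants, that every irreducible factor of $f_{E_{0},26}$ has degree strictly greater than $6$. This would rule out the existence of a point of exact order $26$ on $E(K)$ for any CM $E/\mathbb{Q}$ and any sextic $K$, and in particular excludes $E(K)_{tors}\cong C_{26}$.

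The main anticipated obstacle is purely in the $C_{26}$ step: it is conceivable, though unlikely, that for one of the $13$ CM $j$-invariants some irreducible factor of $f_{E_{0},26}$ has degree $\leq 6$. If that happens, the degree argument alone does not suffice, and one would need to combine the $13$-torsion data with the $2$-torsion data: specifically, one must check that no quadratic twist $E^{d}$ realises \emph{both} a point of order $13$ and a point of order $2$ over the same sextic field. This could be handled by a finer analysis of the CM mod-$26$ Galois representation $\rho_{E,26}$, or by explicit comparison of the fields of definition of the relevant $2$- and $13$-torsion subgroups across the quadratic twist family. I expect, however, that the clean degree bound for $f_{E_{0},26}$ holds in every CM case and that no such refinement is needed.
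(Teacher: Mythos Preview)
Your reduction via Theorem~2.6 to eliminating $C_{19}$ and $C_{26}$ is exactly what the paper does. For both groups the paper simply cites \cite[Proposition 7.(A)]{8}; your use of Lemma~\ref{Lemma 2.7} for $C_{19}$ is an equally valid (and self-contained) alternative.

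For $C_{26}$ your division-polynomial computation over the thirteen CM $j$-invariants is a reasonable substitute for that citation, but there is a gap at $j=0$ and $j=1728$. Your claim that every $E/\mathbb{Q}$ with $j(E)=j$ is a quadratic twist of a fixed $E_0$ fails precisely for these two values: for $j=0$ the twists are sextic, for $j=1728$ they are quartic, and the change of variable relating the division polynomials of two such twists is not in general a rational scaling of $x$. Concretely, $y^2=x^3+1$ has a rational $2$-torsion point while $y^2=x^3+2$ does not, so the degree pattern of $f_{E,2}$ (and hence of $f_{E,26}$) already varies across the $j=0$ family. Since these twist families are infinite, a finite check of representatives cannot close the argument; at $j\in\{0,1728\}$ you would need to argue via the CM mod-$13$ (or mod-$26$) Galois image instead, essentially the contingency plan you sketch in your final paragraph. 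Once that is done the approach goes through, but as written the degree-pattern step does not cover those two $j$-invariants.
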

\begin{proof}
By Theorem 2.6, we see that the only groups contained in $\Phi^{\mathrm{CM}}(6)$ that do not appear in \begin{align*}
    \Phi_{\mathbb{Q}}(2) \cup \Phi_{\mathbb{Q}}(3) \cup \{ C_{30}, C_{2} \oplus C_{18}, C_{3} \oplus C_{9}, C_{3} \oplus C_{12}, C_{4} \oplus C_{12}, C_{6} \oplus C_{6} \}
\end{align*} are $C_{19}$ and $C_{26}$. By \cite[Proposition 7.(A)]{8}, both of these groups can't occur.
\end{proof}

\begin{lemma}\cite[Lemma 2.6, Lemma 2.8, Lemma 2.9]{1}
\label{Lemma 3.2}
 $E/\mathbb{Q}$ be an elliptic curve without CM. Then the following claims hold:
\begin{itemize}
    \item There are no points of order $l^2$, where $l \ge 11$ on an elliptic curve $E/\mathbb{Q}$ over any number field of degree $d < 55$.
    \item There are no points of order $49$ on an elliptic curve $E/\mathbb{Q}$ over any number field of degree $d < 42$.
    \item There are no points of order $81$ on an elliptic curve $E/\mathbb{Q}$ over any number field of degree $d < 81$.
\end{itemize}
\end{lemma}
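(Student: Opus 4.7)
The plan is to reformulate each of the three claims as a Galois orbit lower bound. If $P \in E(\overline{\mathbb{Q}})$ has exact order $N$ and is defined over a field $K$ with $[K:\mathbb{Q}] = d$, then the $\Gal(\overline{\mathbb{Q}}/\mathbb{Q})$-orbit of $P$ has size at most $d$; equivalently, the stabilizer of $P$ inside $G_E(N) \subseteq \Gl_2(\mathbb{Z}/N\mathbb{Z})$ has index at most $d$. It suffices to show that for no non-CM $E/\mathbb{Q}$ does $G_E(N)$ admit such a small stabilizer of a point of exact order $N$.

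For the first claim, I would run through the possible images of $\rho_{E,l}$ for $l \geq 11$: surjective, Borel, normalizer of split or non-split Cartan, or exceptional (the last does not occur for such $l$ on non-CM curves over $\mathbb{Q}$). In the surjective and non-split Cartan normalizer cases the orbit of any non-zero point of $E[l]$ has size $\geq l^2 - 1 \geq 120$, already exceeding $55$. The split Cartan normalizer case admits small orbits on its two eigenlines, but corresponds to rational points on $X_{sp}^{+}(l)$; for $l \geq 11$ these rational points are known to be CM or cuspidal, and can be checked individually. This leaves the Borel case, where $E/\mathbb{Q}$ has a rational $l$-isogeny; by Theorem~\ref{Theorem 2.3}, $l \in \{11, 13, 17, 19, 37, 43, 67, 163\}$. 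The primes $l \geq 37$ contribute only CM $j$-invariants (excluded by hypothesis), and for $l \in \{11, 13, 17, 19\}$ the list of non-CM $j$-invariants is short and explicit. For each such $j$, I would pick a representative $E_0/\mathbb{Q}$ and factor the primitive division polynomial $f_{E_0, l^2}$ over $\mathbb{Q}[x]$: the degree of the smallest irreducible factor lower-bounds $[\mathbb{Q}(x(P)) : \mathbb{Q}]$, is invariant under quadratic twisting, and one verifies that it exceeds $55$ in every case.

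For the second and third claims, Theorem~\ref{Theorem 2.3} excludes rational $49$- and $81$-isogenies outright, so one cannot reduce to a Borel case at the full prime power. Instead, I would exploit the classification of $7$-adic (respectively $3$-adic) Galois images of non-CM elliptic curves over $\mathbb{Q}$: the existence of a short orbit on a point of order $49$ (resp. $81$) forces $\rho_{E,7^\infty}$ (resp. $\rho_{E,3^\infty}$) into one of finitely many explicit Borel-type images lifting a rational $7$- (resp. $3$-, $9$-, or $27$-) isogeny. For each such image one either reads off the minimal stabilizer index directly from its description in $\Gl_2(\mathbb{Z}/49\mathbb{Z})$ or $\Gl_2(\mathbb{Z}/81\mathbb{Z})$, or, picking a representative $E_0/\mathbb{Q}$, applies the division polynomial method to $f_{E_0, 49}$ or $f_{E_0, 81}$.

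The main obstacle across all three parts is this exhaustive case analysis, most notably in parts (2) and (3): one must manage the Borel subcases of $\rho_{E, 7^n}$ and $\rho_{E, 3^n}$ systematically and verify, in every possibility, that the minimal Galois orbit of a point of exact order $l^n$ strictly exceeds the stated bound. This leans on the $l$-adic image classifications of Sutherland, Zywina, and Rouse--Zureick-Brown referenced above, together with explicit division polynomial factorizations carried out in \texttt{Magma}.
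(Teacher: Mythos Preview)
The paper does not prove this lemma itself; it is imported directly from \cite[Lemmas~2.6, 2.8, 2.9]{1}, so there is no in-paper argument against which to compare.

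Your overall framework---bounding below the size of the Galois orbit of a point of exact order $N$ by analysing $G_E(N)$---is sound and is essentially the method of the cited source. However, your execution of the first claim contains factual errors that leave genuine gaps in the case split. You assert that for $l \in \{11,13,17,19\}$ the Borel case reduces to a short explicit list of non-CM $j$-invariants, but $X_0(13)$ has genus $0$ and infinitely many non-cuspidal rational points (this is recorded in Theorem~\ref{Theorem 2.3}), so no finite division-polynomial check over a list of $j$-invariants can close the $l=13$ Borel case. You also claim that $l \geq 37$ in the Borel case contributes only CM $j$-invariants, but $l=37$ admits two non-CM $j$-invariants with a rational $37$-isogeny. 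Finally, the exceptional case \emph{does} occur for $l=13$ on non-CM curves over $\mathbb{Q}$ (Sutherland's label $\mathrm{13S4}$), contrary to your parenthetical dismissal.

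The group-theoretic lifting method you yourself propose for the second and third claims---enumerate the possible images mod $l$, lift each to the possible images mod $l^2$ inside $\Gl_2(\mathbb{Z}/l^2\mathbb{Z})$ with surjective determinant, and compute stabiliser indices directly---does not require a finite list of $j$-invariants and would handle the $l=13$ Borel case (and the $\mathrm{13S4}$ case) uniformly. Applying that method consistently across all three claims is both closer to what \cite{1} actually does and would repair the gaps above.
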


\begin{lemma}\cite[Lemma 5]{8}
\label{Lemma 3.3}
 $E/\mathbb{Q}$ be an elliptic curve without CM, $K/\mathbb{Q}$ a sextic field and $P_{p} \in  E(K)_{tors}$ a
point of odd prime order $p$. Then $E$ has a rational $p$-isogeny, except if $E$ has $\mathrm{LMFDB}$ label $\href{http://www.lmfdb.org/EllipticCurve/Q/2450ba1/}{\mathrm{2450.y1}}$ or $\href{http://www.lmfdb.org/EllipticCurve/Q/2450bd1/}{\mathrm{2450.z1}}$,
and $p = 7$, where there are not rational $7$-isogenies. Moreover, in those last cases, the unique sextic fields where the torsion grows are $K = \mathbb{Q}(E[2])$ and $K' = \mathbb{Q}(P_7)$ ($K'/\mathbb{Q}$ is a non-Galois), where $E(K)_{tors} \cong C_2 \oplus C_2$ and $E(K)_{tors} \cong C_7$ respectively.
\end{lemma}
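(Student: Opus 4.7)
The plan is to work prime by prime. By Lemma \ref{Lemma 2.7}, $R_{\mathbb{Q}}(6) = \{2, 3, 5, 7, 13\}$, so only $p \in \{3, 5, 7, 13\}$ need be considered. The key group-theoretic reformulation, via the mod-$p$ Galois representation, is that $E$ admits a rational $p$-isogeny iff the image $G_E(p) \subseteq \mathrm{GL}_2(\mathbb{F}_p)$ is contained in a Borel subgroup, while $E$ has a point of order $p$ defined over some sextic field iff $G_E(p)$ contains a subgroup of index dividing $6$ that fixes some nonzero vector of $\mathbb{F}_p^2$. The task thus reduces to asking, for each non-Borel image $G$ arising in Zywina's classification \cite{3} of the possibilities for $G_E(p)$ with $E$ non-CM, whether $G$ contains such a low-index vector-stabilizing subgroup.

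For $p \in \{3, 5, 13\}$ the non-Borel images are normalizers of split or non-split Cartan subgroups and, in the case $p = 5$, a couple of projectively exceptional subgroups. A direct orbit-stabilizer computation in each case shows that no subgroup of index dividing $6$ can fix a nonzero vector of $\mathbb{F}_p^2$. Hence, for these primes, the existence of $P_p$ over a sextic field forces $G_E(p)$ into a Borel, producing a rational $p$-isogeny as required.

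The delicate case is $p = 7$. Inspecting the Zywina/Sutherland list of mod-$7$ images for non-CM $E/\mathbb{Q}$, there is essentially one non-Borel image for which some subgroup of index at most $6$ fixes a vector of $\mathbb{F}_7^2$, and it sits inside the normalizer of a non-split Cartan subgroup. The corresponding modular curve $X_H$ is a cover of $X_{ns}^+(7)$ cutting out this extra index condition, and its non-CM rational points are known to consist of exactly the two $j$-invariants realized by $\mathrm{2450.y1}$ and $\mathrm{2450.z1}$; neither of these has a rational $7$-isogeny, giving the exceptional clause. I expect the classification of rational points on $X_H$ (cutting the non-CM sporadic points down to exactly these two) to be the genuine obstacle, the rest being finite group-theoretic bookkeeping.

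For the ``moreover'' part I would apply the division polynomial method to each of the two exceptional curves directly. Computing and factoring $f_{E,n}$ over $\mathbb{Q}$ for every $n$ compatible with $R_{\mathbb{Q}}(6)$, one verifies: the $2$-division polynomial is irreducible cubic with non-square discriminant, so $\mathbb{Q}(E[2])$ is a sextic $S_3$-extension realizing $E(\mathbb{Q}(E[2]))_{tors} \cong C_2 \oplus C_2$; $f_{E,7}$ has a distinguished irreducible sextic factor whose root field is non-Galois of degree $6$, namely $\mathbb{Q}(P_7)$, yielding $E(\mathbb{Q}(P_7))_{tors} \cong C_7$; and for every other $n$ the factorization of $f_{E,n}$ forbids a point of exact order $n$ from being defined over any sextic field, ruling out any further torsion growth and completing the lemma.
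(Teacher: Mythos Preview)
The paper does not prove this lemma at all: it is quoted verbatim as \cite[Lemma 5]{8} (Daniels--Gonz\'alez-Jim\'enez) and used as a black box throughout Section~3. So there is no ``paper's own proof'' to compare against; what you have written is a reconstruction of the argument from \cite{8}.

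That said, your outline is essentially the argument one finds there. The reduction to checking, for each non-Borel image in Zywina's list, whether $G_E(p)$ has an orbit of size dividing $6$ on $\mathbb{F}_p^2\setminus\{0\}$ is exactly right, and for $p\in\{3,5,13\}$ the orbit computations go through as you indicate (your enumeration of the non-Borel images at $p=13$ is slightly incomplete---you omit the $\mathrm{13S4}$ image---but the orbit-size check still rules it out, since scalars already force orbit sizes divisible by $12$). For $p=7$ you have correctly located the crux: the exceptional image is $\mathrm{7Ns.2.1}$ in Sutherland's labeling, an index-$3$ subgroup of the normalizer of the non-split Cartan, and the determination that its only non-CM rational $j$-invariants are those of $\mathrm{2450.y1}$ and $\mathrm{2450.z1}$ is indeed the substantive input. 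Your plan for the ``moreover'' clause via division polynomials on these two specific curves is exactly how it is handled in \cite{8}.

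In short: your proposal is a faithful sketch of the cited proof, not of anything in the present paper, and it is correct apart from the minor omission at $p=13$.
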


\begin{lemma}
\label{Lemma 3.4}
 $[K:\mathbb{Q}]=6$. If $L,L'$ are cubic subextensions of $K$ and if $L/\mathbb{Q}$ is Galois, then $L=L'$.
\end{lemma}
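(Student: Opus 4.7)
The plan is to use the standard compositum formula, exploiting the fact that one of the two cubic subfields is assumed to be Galois. Since $L/\mathbb{Q}$ is Galois, for any subfield $L'\subseteq K$ we have the degree formula
\[
[LL':\mathbb{Q}] \;=\; \frac{[L:\mathbb{Q}]\,[L':\mathbb{Q}]}{[L\cap L':\mathbb{Q}]}.
\]
Both $[L:\mathbb{Q}]$ and $[L':\mathbb{Q}]$ equal $3$, so $[L\cap L':\mathbb{Q}]$ divides $3$ and hence is either $1$ or $3$.

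If $[L\cap L':\mathbb{Q}]=1$ then $[LL':\mathbb{Q}]=9$; but $LL'\subseteq K$ and $[K:\mathbb{Q}]=6$, which is the desired contradiction. So we must be in the case $[L\cap L':\mathbb{Q}]=3$, and then $L\cap L'\subseteq L$ forces $L\cap L'=L$, hence $L\subseteq L'$, and comparing degrees gives $L=L'$.

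There is no real obstacle here — the argument is a one-line application of the compositum degree formula, and the Galois hypothesis on $L$ is used precisely to make that formula an equality rather than just an inequality. The only thing worth double-checking is that $K$ indeed contains the compositum $LL'$ (which is immediate from $L,L'\subseteq K$) and that the assumption $L/\mathbb{Q}$ Galois cannot be dropped (indeed, a non-Galois cubic field can sit inside a degree-$6$ field alongside its Galois conjugates, so the lemma is sharp).
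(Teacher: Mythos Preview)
Your proof is correct and is essentially the same as the paper's: both arguments hinge on the fact that, since $L/\mathbb{Q}$ is Galois, $[LL':L']=[L:L\cap L']$, whence $L\neq L'$ would force $[LL':\mathbb{Q}]=9>6$. The paper phrases this via the isomorphism $\Gal(LL'/L')\cong\Gal(L/\mathbb{Q})$ rather than the degree formula, but it is the same step.
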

\begin{proof}
Assume that $L \neq L'$. Obviously $L \cap L'= \mathbb{Q}$ and $LL'=K$. By Galois theory we have $\Gal(K/L') \cong \Gal(LL'/L') \cong \Gal(L/\mathbb{Q})$. Since $|\Gal(K/L')|$ is $2$ and $|\Gal(L/\mathbb{Q})|=3$, we arrive at the contradiction.
\end{proof}

\begin{theorem}
 $E/\mathbb{Q}$ be an elliptic curve without CM. Then $E(K)_{tors}$ can't contain $C_{169}$, $C_{49}$, $C_{39}$, $C_{65}$, $C_{91}$, $C_{35}$, $C_{28}$, $C_{20}$, $C_{26}$, $C_{63}$, $C_{42}$, $C_{25}$, $C_{45}$, $C_{27}$, $C_{24}$.
\end{theorem}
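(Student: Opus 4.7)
My plan is to dispose of the fifteen excluded groups by combining the rational isogeny classification of Theorem~2.3 with Lemmas~3.2 and~3.3 and the Galois-representation machinery recalled in Section~2.

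The groups $C_{169}$ and $C_{49}$ follow immediately from the first two bullets of Lemma~3.2, which already rule out torsion of orders $169$ and $49$ in any field of degree less than $55$ and $42$, respectively. The ``product of two distinct odd primes'' family $C_{35},C_{39},C_{65},C_{91}$ is handled uniformly: a $K$-rational point of order $pq$ yields $K$-rational points of orders $p$ and $q$, so by Lemma~3.3, $E/\mathbb{Q}$ admits rational $p$- and $q$-isogenies. Since $\gcd(p,q)=1$, the sum of their kernels is a rational cyclic subgroup of order $pq$, producing a rational $pq$-isogeny; but $35,39,65,91$ are all absent from the list in Theorem~2.3, a contradiction. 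The exceptional curves $\mathrm{2450.y1}$ and $\mathrm{2450.z1}$ of Lemma~3.3 are dealt with directly: their sextic torsion is pinned at $C_2\oplus C_2$ or $C_7$, precluding $C_{35}$ and $C_{91}$. The same strategy applies to $C_{45}$ and $C_{63}$ once a $K$-rational cyclic subgroup of order $9$ has been upgraded to a rational $9$-isogeny (the rational $3$-isogeny from Lemma~3.3 restricts the Galois orbit of the $9$-subgroup, and a compositum argument using the quotient by the rational $3$-subgroup promotes it); combined with the rational $5$- or $7$-isogeny this forces a rational $45$- or $63$-isogeny, neither of which appears in Theorem~2.3.

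The isogeny-degree trick fails for $C_{25}$ and $C_{27}$, since $25$ and $27$ both lie in the list of Theorem~2.3. For $C_{27}$, only finitely many $j$-invariants carry a rational $27$-isogeny, so I would enumerate them and apply the division polynomial method of Section~2, factoring $f_{E_0,27}$ for each representative $E_0$ and verifying that the minimum irreducible factor has degree exceeding $6$. For $C_{25}$, if $E$ admits a rational $25$-isogeny then the character $\chi\colon G_{\mathbb{Q}}\to(\mathbb{Z}/25\mathbb{Z})^{\times}$ governing the action on its kernel has order dividing $\gcd(|(\mathbb{Z}/25\mathbb{Z})^{\times}|,6)=\gcd(20,6)=2$; order $1$ would force $C_{25}\subset E(\mathbb{Q})$, contradicting Mazur's theorem, while order $2$ would give $C_{25}\subset E(F)$ for some quadratic $F$, contradicting Theorem~2.2. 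If instead $E$ has no rational $25$-isogeny, then (Lemma~3.3 applied to the order-$5$ subpoint) it still has a rational $5$-isogeny, and one analyzes the five cyclic $25$-subgroups lying above its kernel: Galois permutes them with orbit of size $\leq 6$, and a mod-$25$ image analysis eliminates the remaining possibilities.

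The mixed-parity groups $C_{20}, C_{24}, C_{26}, C_{28}, C_{42}$ each contain an odd cyclic factor producing (via Lemma~3.3) a rational odd-prime isogeny of degree $3,5,7,$ or $13$; the additional $2$-power factor is pinned down using the Rouse--Zureick-Brown $2$-adic classification and the degree tables of Gonz\'alez-Jim\'enez and Lozano-Robledo recalled in the preliminaries. Concretely, one enumerates the $2$-adic images compatible with a $K$-rational point of the required $2$-power order, intersects with the $j$-invariants forced by the odd-prime rational isogeny, and rules out survivors by the division polynomial method. For $C_{42}$ this amounts to inspecting the list of sextic extensions for which $C_{21}\subseteq E(K)$ and verifying that none acquires additional $C_2$ over the same $K$. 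I expect $C_{26}$ to be the main obstacle, because infinitely many $j$-invariants admit a rational $13$-isogeny: direct enumeration fails, and the argument must combine the rigid character-theoretic structure of the Borel image of $\rho_{E,13}$ with the $2$-adic constraints to force the contradiction.
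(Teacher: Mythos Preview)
Your handling of $C_{169}$, $C_{49}$, and the four products of distinct odd primes matches the paper exactly. The real problems lie in the cases requiring a $9$- or $27$-isogeny.

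For $C_{45}$ and $C_{63}$ you claim that the rational $3$-isogeny together with the $K$-rational $9$-subgroup can be ``promoted'' to a rational $9$-isogeny. This step is not justified, and in fact it is false in general: above a Galois-stable $C_3$ there sit three cyclic $9$-subgroups, and the Galois orbit of $\langle P_9\rangle$ among them may well have size $3$. The paper's own treatment of $C_3\oplus C_{18}$ later explicitly handles the subcase ``$E$ does not have a rational $9$-isogeny'' while $C_9\subseteq E(K)$ and $G_E(3)\in\{3\mathrm{B}.1.1,3\mathrm{B}.1.2\}$, confirming that your promotion argument cannot work as stated. The paper avoids this entirely: from the rational $3$- and $5$- (resp.\ $7$-) isogenies one gets a rational $15$- (resp.\ $21$-) isogeny, which already pins $j(E)$ to a finite explicit list; one then factors $f_{E,45}$ (resp.\ $f_{E,63}$, and likewise $f_{E,42}$) and checks that no irreducible factor has degree $\le 6$.

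For $C_{27}$ you again implicitly assume a rational $27$-isogeny before enumerating $j$-invariants; nothing in your outline produces one. The paper's argument is completely different and much cleaner: pick $P_{81}\in E(\overline{\mathbb{Q}})$ with $3P_{81}=P_{27}$; then $[\mathbb{Q}(P_{81}):\mathbb{Q}(P_{27})]\le 9$ by \cite[Proposition~4.6]{2}, so $[\mathbb{Q}(P_{81}):\mathbb{Q}]\le 54<81$, contradicting Lemma~\ref{Lemma 3.2}. No isogeny bookkeeping is needed.

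For the remaining cases the paper is also more concrete than your sketch. The groups $C_{20}$, $C_{26}$, $C_{28}$ are simply cited from \cite[Proposition~6]{8}. For $C_{25}$ the paper does a single uniform computation: list the admissible $G_E(5)$, lift each to all subgroups of $\Gl_2(\mathbb{Z}/25\mathbb{Z})$ with surjective determinant, and check that no order-$25$ vector has stabiliser of index $6$. For $C_{24}$ the paper invokes Morrow's composite-image result \cite[Theorem~A(3)]{4} to force $|G_E(8)|\mid 128$, hence $[\mathbb{Q}(E[2]):\mathbb{Q}]$ a $2$-power; combined with \cite[Proposition~4.8]{2} this places both $P_8$ and $P_3$ over at most quadratic fields, so $P_{24}$ lives over a $C_2\times C_2$ extension, contradicting the known quartic classification.
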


\begin{proof}
If $E$ has $\mathrm{LMFDB}$ label $\href{http://www.lmfdb.org/EllipticCurve/Q/2450ba1/}{\mathrm{2450.y1}}$ or $\href{http://www.lmfdb.org/EllipticCurve/Q/2450bd1/}{\mathrm{2450.z1}}$, this holds by Lemma \ref{Lemma 3.3}. Suppose this isnt the case. By \ref{Lemma 3.2}, $E(K)$ can't contain $C_{169}$ and $C_{49}$. By Lemma \ref{Lemma 3.3}, if $E(K)$ contains points $P_p, P_q$ of odd prime orders $p$ and $q$, $p \neq q$, then $E(K)$ has a rational $p$ and $q$-isogenies, so it has a rational $pq$-isogeny. When $pq \in \{ 39, 65, 91, 35 \}$, this cannot happen, because of Theorem 2.3 and so $E(K)$ can't contain $C_{39}, C_{65}, C_{91}$ or $C_{35}$. In \cite[Proposition 6.s), k)]{8}, it has been proven that $E(K)$ can't contain $C_{28}$, $C_{20}$ or $C_{26}$, respectively. 
\\
    \fbox{$C_{63}$, $C_{42}$}: From Lemma \ref{Lemma 3.3} we conclude that $E$ has rational $3$ and $7$ isogenies, so it has a rational $21$-isogeny, so $j(E) \in \{-3^2\cdot5^6/2^3, 3^3\cdot5^3/2, 3^3\cdot5^3\cdot101^3/2^{21}, -3^3\cdot5^3\cdot383^3/2^7 \}$. For each of the possible $j$-invariants, using division polynomial method in \texttt{Magma} \cite{35}, we define polynomials $f_{63}$ and $f_{42}$ whose roots are $x$-coordinates of points on $E$ of order exactly $63$ and $42$, respectively. None of these polynomials has an irreducible factor of degree less then or equal to $6$. Hence, a point $P$ of order $63$ (resp. $42$) can't be defined over $K$.
    \\
    \\
    \fbox{$C_{25}$}: By Lemma \ref{Lemma 3.3}, $E$ has a rational $5$-isogeny. By \cite[Table 2]{2}, we see that $G_{E}(5) \in \{ \mathrm{5Cs.1.1, 5Cs.1.3, 5Cs.4.1, 5B.1.1, 5B.1.4, 5B.4.1} \}$. For each of these posibilities of $G_{E}(5)$, we find all subgroups $G$ of $\Gl_{2}(\mathbb{Z}/25\mathbb{Z})$ with surjective determinant that reduce to $G_{E}(5)$ modulo $5$. Then for each vector $v \in (\mathbb{Z}/25\mathbb{Z})^2$ of order $25$ we calculate the index of $G_v$ in $G$, where $G_v$ is stabiliser subgroup corresponding to vector $v$. By Theorem \ref{Theorem 2.1}, Theorem \ref{Theorem 2.4} and Theorem \ref{Theorem 2.5} we have that $[\mathbb{Q}(P_{25}):\mathbb{Q}] \notin \{ 1, 2, 3 \}$, so we have $[\mathbb{Q}(P_{25}):\mathbb{Q}]=6$. This means that $[G:G_v]=6$. Computation in \texttt{Magma} \cite{35} shows that this does not occur. Therefore, $E$ cannot have a point $P_{25}$ defined over $K$.
    \\
    \\
    \fbox{$C_{45}$}: Since $E$ has rational $3$ and $5$ isogenies by lemma 3.2, $E$ has a rational $15$-isogeny, so $j(E) \in \{ -5^2/2, -5^2 \cdot241^3/2^3, -29^{3} \cdot 5/2^{5}, 211^3 \cdot 5/2^{15} \}$. Using exactly the same method as in the $C_{63}$ and $C_{42}$ case, we find that a point of order $45$ can't be defined over $K$.
    \\
    \\
    \fbox{$C_{27}$}:  $P_{27}$ be a point of order $27$ in $E(K)$ and $P_{81}$ be a point of order $81$ in $E(\Bar{\mathbb{Q}})$ such that $3P_{81}=P_{27}$. From \cite[Proposition 4.6.]{2}, we have $[\mathbb{Q}(P_{81}):\mathbb{Q}(P_{27})] \le 9$. Since $[\mathbb{Q}(P_{27}):\mathbb{Q}] \le 6$, we have $[\mathbb{Q}(P_{81}): \mathbb{Q}]=[\mathbb{Q}(P_{81}):\mathbb{Q}(P_{27})] \cdot [\mathbb{Q}(P_{27}):\mathbb{Q}] \le 54$. This contradicts Lemma \ref{Lemma 3.2}.
    \\
    \\
    \fbox{$C_{24}$}: By Lemma \ref{Lemma 3.3}, $\rho_{E,3}$ is not surjective. $\rho_{E,8}$ can't be surjective because a point $P_8$ of order $8$ on $E(\overline{\mathbb{Q}})$ would satisfy $[\mathbb{Q}(P_8):\mathbb{Q}] > 6$.  By \cite[Theorem A (3)]{4}, we have that $G_{E}(8) \subseteq H$, for $H \in \{ \href{http://users.wfu.edu/rouseja/2adic/X30.html}{H_{30}}, \href{http://users.wfu.edu/rouseja/2adic/X31.html}{H_{31}}, \href{http://users.wfu.edu/rouseja/2adic/X39.html}{H_{39}}, \href{http://users.wfu.edu/rouseja/2adic/X45.html}{H_{45}}, \href{http://users.wfu.edu/rouseja/2adic/X47.html}{H_{47}}, \href{http://users.wfu.edu/rouseja/2adic/X50.html}{H_{50}} \}$. Each of these six groups has an order equal to $128$. This means that $[\mathbb{Q}(E[8]):\mathbb{Q}]$ is a power of $2$ that divides $128$. Consequently, $[\mathbb{Q}(E[2]):\mathbb{Q}]$ is a power of $2$. Hence, each $2$-torsion point on $E$ is defined over at most quadratic extension of $\mathbb{Q}$. Since $2^k$-torsion grows in extensions of degree $1,2$ or $4$ (\cite[Proposition 4.8)]{2})  and since $E(K) \supseteq C_8$, we need to have point of order $8$ on $E$  defined over at most quadratic extension of $\mathbb{Q}$. Since $E$ has a rational $3$-isogeny, by \cite[Table 1]{2} we see that $E$ must have a point $P_3$ of order $3$ such that $[\mathbb{Q}(P_3):\mathbb{Q}] \in \{ 1,2 \}$. Therefore a point $P_8+P_3$ of order $24$ on $E$ is defined over a field $F=\mathbb{Q}(P_3,P_8)$ such that $\Gal(F/\mathbb{Q}) \in \{ C_1, C_2, C_{2} \oplus C_2 \}$. This is impossible because of Theorem \ref{Theorem 2.1}, Theorem \ref{Theorem 2.4} and \cite[Theorem 1.4.]{22}.
\end{proof}

\begin{theorem}
 $E/\mathbb{Q}$ be an elliptic curve without $\mathrm{CM}$. Then $E(K)_{tors}$ can't contain $C_3 \oplus C_{15}$, $C_3 \oplus C_{21}$,$C_{32}$, $C_{2} \oplus C_{16}$, $C_4 \oplus C_{8}$, $C_6 \oplus C_{12}$, $C_7 \oplus C_{7}$ or $C_9 \oplus C_{9}.$
\end{theorem}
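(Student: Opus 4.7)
The plan is to eliminate each of the eight groups by one of four techniques: an isogeny-to-division-polynomial reduction; a 2-adic image analysis modelled on the $C_{24}$ case; a combined 2-adic/mod-3 compositum argument; and a Weil-pairing argument identifying $K$ with a cyclotomic field.

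For $C_3 \oplus C_{15}$ and $C_3 \oplus C_{21}$, each group contains $C_3 \oplus C_3$ together with a point of odd prime order $p \in \{5, 7\}$. The exceptional curves 2450.y1 and 2450.z1 of Lemma \ref{Lemma 3.3} are ruled out at once, since on them the sextic torsion is only $C_2 \oplus C_2$ or $C_7$. Otherwise Lemma \ref{Lemma 3.3} yields rational 3- and $p$-isogenies, and Theorem \ref{Theorem 2.3} forces a rational $3p$-isogeny, constraining $j(E)$ to the same four explicit $j$-values already used for $C_{45}$, $C_{63}$, $C_{42}$ in the previous theorem. For each such $j$, I would factor $f_{E_0,15}$ or $f_{E_0,21}$ together with $f_{E_0,3}$ in \texttt{Magma} and check that no sextic field simultaneously realizes both the full 3-torsion and the required cyclic point.

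For $C_{32}$, $C_2 \oplus C_{16}$, $C_4 \oplus C_8$, and $C_6 \oplus C_{12}$, I would extend the 2-adic strategy from the $C_{24}$ case. Combining \cite[Theorem A]{4}, which restricts the 2-adic image $G_E(2^\infty)$ to a short list of Rouse--Zureick-Brown subgroups, with the degree tables of \cite{5} and \cite{36}, one reads off for each candidate image the minimum $\mathbb{Q}$-degree of a subgroup of the required isomorphism type. For the three pure 2-power cases this should eliminate every candidate directly. For $C_6 \oplus C_{12} \cong (C_2 \oplus C_4) \oplus (C_3 \oplus C_3)$, one additionally combines the 2-adic bound with the mod-3 analysis of \cite[Table 1]{2} (applied to $E$ with the rational 3-isogeny from Lemma \ref{Lemma 3.3} and full 3-torsion over $K$), arguing that $\mathbb{Q}(E[3]) \cdot \mathbb{Q}(E[2], P_4)$ cannot sit inside a sextic field.

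For $C_7 \oplus C_7$ and $C_9 \oplus C_9$, the Weil pairing forces $\mathbb{Q}(\zeta_p) \subseteq \mathbb{Q}(E[p]) \subseteq K$ for $p \in \{7, 9\}$; since $\varphi(p) = 6 = [K:\mathbb{Q}]$ in both cases, $K = \mathbb{Q}(\zeta_p) = \mathbb{Q}(E[p])$ and $G_E(p) \cong (\mathbb{Z}/p\mathbb{Z})^*$ is cyclic of order $6$ with surjective determinant and trivial intersection with $SL_2(\mathbb{Z}/p\mathbb{Z})$. Such an abelian semisimple image decomposes $E[p]$ into two $1$-dimensional rational eigenspaces with distinct eigenvalues (equal eigenvalues would force $\det$ to be a square, contradicting surjectivity), giving $E$ two independent rational cyclic $p$-subgroups. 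By Zywina's classification of mod-$p$ Galois images \cite{3}, no non-CM $E/\mathbb{Q}$ can have such a small mod-$p$ image; for $p = 9$ one additionally reduces mod $3$ and appeals to the same tables. The principal difficulty I foresee is the $C_6 \oplus C_{12}$ case, where a careful case split over 2-adic and mod-3 image data is required; the other cases follow either from a finite enumeration of $j$-invariants or directly from the cited Galois-image classifications.
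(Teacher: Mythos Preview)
Your proposal is broadly viable but diverges from the paper's proof in several places, and one of those divergences introduces a real gap.

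For $C_3\oplus C_{15}$ and $C_3\oplus C_{21}$, you reduce to the four $j$-invariants with a rational $15$- (resp.\ $21$-) isogeny and then run division polynomials. This can be made to work, but note that the two sets of four $j$-values are different (those for $C_{45}$ versus those for $C_{63},C_{42}$); your phrasing conflates them. The paper instead splits on $G_E(3)$: the case $G_E(3)=\mathrm{3Cs.1.1}$ is killed by observing that one then gets independent rational $3p$- and $3$-isogenies, hence an isogenous curve with a rational $9p$-isogeny, forbidden by Theorem~\ref{Theorem 2.3}. In the remaining cases $K=\mathbb{Q}(E[3])$ is an $S_3$-extension with a unique quadratic subfield $F$, and the point of order $p$ is forced down to $F$, contradicting $\Phi_{\mathbb{Q}}(2)$. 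This avoids any computation for $C_3\oplus C_{21}$ and reduces $C_3\oplus C_{15}$ to the four LMFDB curves with $C_{15}$ over a quadratic field.

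For the pure $2$-power groups $C_{32}$, $C_2\oplus C_{16}$, $C_4\oplus C_8$, your Rouse--Zureick-Brown case analysis is unnecessary: the paper cites \cite[Corollary~3.5]{5} directly to get $4\mid[\mathbb{Q}(T):\mathbb{Q}]$, which is incompatible with $[K:\mathbb{Q}]=6$. For $C_6\oplus C_{12}$ both you and the paper do a case split on $G_E(2)$ and $G_E(3)$; the paper's argument is quite intricate (fibre products with $X_{20}$, Chabauty on a genus-$2$ Jacobian, the Brau--Jones $j$-parametrisation for $\mathbb{Q}(E[2])\subseteq\mathbb{Q}(E[3])$), so ``combine the 2-adic bound with the mod-3 analysis'' undersells what is needed.

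The genuine gap is in your treatment of $C_9\oplus C_9$. Your Weil-pairing reduction to $K=\mathbb{Q}(\zeta_9)$ and $|G_E(9)|=6$ is fine, but the claim that ``an abelian semisimple image decomposes $E[9]$ into two $1$-dimensional rational eigenspaces'' is not justified over $\mathbb{Z}/9\mathbb{Z}$, which is not a field; diagonalisability and the equal-eigenvalue/non-square-determinant dichotomy do not carry over without further argument, and ``reduce mod $3$ and appeal to the same tables'' does not fill this in. The paper instead enumerates (in \texttt{Magma}) all order-$6$ subgroups of $\Gl_2(\mathbb{Z}/9\mathbb{Z})$ with surjective determinant, checks that each is upper-triangular (rational $9$-isogeny) and reduces mod~$3$ to $\mathrm{3Cs.1.1}$ (two independent rational $3$-isogenies), and concludes that $E$ is isogenous to a curve with a rational $27$-isogeny, hence CM. For $C_7\oplus C_7$ your eigenspace argument does work, but the paper's one-liner---$|G_E(7)|\le 6$ contradicts the minimum $|G_E(7)|\ge 18$ from \cite[Table~1]{2}---is considerably shorter.
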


\begin{proof}

    \fbox{$C_3 \oplus C_{15}$, $C_3 \oplus C_{21}$} : Since $\mathbb{Q}(E[3]) \subseteq K$, we need to have $G_{E}(3) \in \{ \mathrm{3Cs.1.1, 3B.1.1, 3B.1.2} \}$.
If $G_{E}(3)=\mathrm{3Cs.1.1}$, Lemma \ref{Lemma 3.3} gives us that if $p \in \{ 5,7 \}$, then $E$ has a rational $p$-isogeny.  $P_p \in E(K)$ be a point of order $p$.  $\{ P_3, Q_3 \}$ be a basis for $E[3]$ such that $G_{E}(3)=\mathrm{3Cs.1.1}$ with respect to this basis. This means that $< \negthickspace P_3 \negthickspace >$ and $< \negthickspace Q_3 \negthickspace >$ are kernels of two independent rational $3$-isogenies. Now  $< \negthickspace P_p+P_3 \negthickspace >$ and $< \negthickspace Q_3 \negthickspace >$ are kernels of independent rational $3p$ and $3$ isogenies, respectively. We conclude that $E$ is isogenous to $E'/\mathbb{Q}$ with a rational $9p$-isogeny, which is impossible by Theorem \ref{Theorem 2.3}. Therefore $G_{E}(3) \in \{ \mathrm{3B.1.1, 3B.1.2} \}$.
\\Assume that $C_{3} \oplus C_{15} \subseteq E(K)$. We have that $K=\mathbb{Q}(E[3])$ and $\Gal(K/\mathbb{Q}) \cong S_3$.  $P_5$ be a point of order $5$ in $E(K)$. From \cite[Table 1]{2} we see that $[\mathbb{Q}(P_5):\mathbb{Q}] \in \{ 1,2 \}$. Denote by $F$ a unique quadratic subextension of $K$. For any possibility for $G_{E}(3)$, we have that there exists a point $P_3$ of order $3$ in $E(K)$ defined over quadratic extension of $\mathbb{Q}$. Therefore we have $\mathbb{Q}(P_5),\mathbb{Q}(P_3) \subseteq F$, so $E(F) \supseteq C_{15}$. By \cite[Theorem 2.c)]{7}, $\mathrm{LMFDB}$ label of $E$ is $\href{http://www.lmfdb.org/EllipticCurve/Q/50b1/}{\mathrm{50.b3}}$, $\href{http://www.lmfdb.org/EllipticCurve/Q/50b2/}{\mathrm{50.b4}}$, $\href{http://www.lmfdb.org/EllipticCurve/Q/50a3/}{\mathrm{50.a2}}$ or $\href{http://www.lmfdb.org/EllipticCurve/Q/450b4/}{\mathrm{450.g4}}$. None of these four curves have $C_3 \oplus C_{15}$ torsion over sextic field,  by \cite{1}.
    \\
    \\
    Assume that $C_{3} \oplus C_{21} \subseteq E(K)$. We have that $K=\mathbb{Q}(E[3])$ and $\Gal(K/\mathbb{Q}) \cong S_3$.  $P_7$ be a point of order $7$ in $E(K)$. If $[\mathbb{Q}(P_7):\mathbb{Q}] \in \{ 3,6 \}$, then by \cite[Table 1]{2} $\mathbb{Q}(P_7)$ is cyclic Galois over $\mathbb{Q}$. But $K$ is not cyclic and it does not have any Galois cubic subextensions. We conclude that $[\mathbb{Q}(P_7):\mathbb{Q}] \in \{ 1,2 \}$.  $P_3$ denote a point of order $3$ in $E(K)$ defined over at most quadratic extension of $\mathbb{Q}$. Therefore, $\mathbb{Q}(P_7),\mathbb{Q}(P_3) \subseteq F$ so $E(F) \supseteq C_{21}$, but this is impossible, by Theorem \ref{Theorem 2.4}.
\\
\\
    \fbox{$C_{32}$, $C_{2} \oplus C_{16}$, $C_4 \oplus C_8$}: By \cite[Corollary 3.5]{5}, we get that if $T$ is one of these three groups, then $[\mathbb{Q}(T):\mathbb{Q}]$ must be divisible by $4$, which is impossible since $\mathbb{Q}(T) \subseteq K$.
    \\
    \\
    \fbox{$C_6 \oplus C_{12}$}: If $G_{E}(3) \in \{ \mathrm{3B.1.1}, \mathrm{3B.1.2} \}$, we have $K=\mathbb{Q}(E[3])$, $K$ is an $S_3$ extension of $\mathbb{Q}$ and $j(E)=\frac{27(y+1)(y+9)^3}{y^3}$, for some $y \in \mathbb{Q}^{\times}$.
    \\
    If $G_{E}(2)=\mathrm{2Cn}$, then $\mathbb{Q}(E[2])$ is cubic Galois over $\mathbb{Q}$ contained in $K$, which is impossible since $K$ is an $S_3$ extension of $\mathbb{Q}$.
    \\
    If $G_{E}(2)=\Gl_{2}(\mathbb{F}_{2})$, examining the results of \cite{36} we find that if $E$ attains a point of order $4$ over sextic field and $G_{E}(2)=\Gl_{2}(\mathbb{F}_{2})$, then the image of $2$-adic representation associated to $E$ is contained in $H_{20}$, so $j(E)=\frac{(x^2-3)^3(-4x^2+32x+44)}{(x+1)^4}$. Taking the fiber product of $X_{0}(3)$ and $X_{20}$ we get a singular genus $1$ curve $C$ whose normalization is the elliptic curve $E'/\mathbb{Q}$ with $\mathrm{LMFDB}$ label $\href{http://www.lmfdb.org/EllipticCurve/Q/48a3/}{\mathrm{48.a3}}$. Inspecting the rational points on $C$ we get that there are $4$ non-singular non-cuspidal points corresponding to the $j$-invariants $109503/64$
and $-35937/4$.  Additionally, since $\mathbb{Q}(E[2]) \subseteq \mathbb{Q}(E[3])$, by \cite[Remark 1.5]{28}, we have $j(E)=2^{10}3^{3}y^3(1-4y^3)$. For $a \in \{109503/64,-35937/4 \} $ we find that $2^{10}3^{3}y^3(1-4y^3)-a=0$ has no rational solutions. Therefore, this case can't occur.
\\
Consider the case $G_{E}(2) \in \{ \mathrm{2B}, \mathrm{2Cs} \}$. There is a unique quadratic extension contained in $K=\mathbb{Q}(E[3])$, namely $\mathbb{Q}(\zeta_3)$ and we have $\mathbb{Q}(E[2]) \subseteq \mathbb{Q}(\zeta_3)$. Since every point $P_2$ of order $2$ on $E$ is defined over at most quadratic extension of $\mathbb{Q}$, by \cite[Proposition 4.8]{2} we have that a point $P_4$ of order $4$ on $E(K)$ satisfies $[\mathbb{Q}(P_4):\mathbb{Q}] \in \{ 1,2 \}$. It follows that $C_2 \oplus C_4 \subseteq E(\mathbb{Q}(E[2])) \subseteq E(\mathbb{Q}(\zeta_3))$.  Since $G_{E}(3) \in \{ \mathrm{3B.1.1}, \mathrm{3B.1.2} \}$, by \cite[Table 1]{2} we can see that there must exist a point $P_3$ in $E(K)$ such that $[\mathbb{Q}(P_3):\mathbb{Q}] \in \{ 1,2 \}$, so $P_3$ is defined over $\mathbb{Q}(E[2]) \subseteq \mathbb{Q}(\zeta_3)$. Finally, we have that $C_2 \oplus C_{12} \subseteq E(\mathbb{Q}(\zeta_3))$, but this is impossible by \cite[Theorem 1, iii)]{29}.
\\
\\
 If $G_{E}(3)=\mathrm{3Cs.1.1} \subseteq \mathrm{3Ns}$, so $j(E)=y^3$, by \cite[ Theorem 1.1]{3}. If $G_{E}(2)=\Gl_{2}(\mathbb{F}_{2})$, then again we get that $2$-adic representation associated to $E$ is contained in $H_{20}$. We have that $y^3=\frac{(x^2-3)^3(-4x^2+32x+44)}{(x+1)^4}$ induces a genus $2$ hyperelliptic curve $C$. In \texttt{Magma} \cite{35}, we compute it's  Jacobian $J(C)$ and see that it has rank $0$ over $\mathbb{Q}$. Using Chabauty method implemented in \texttt{Magma} \cite{35} we conclude that it does not have affine rational points.
\\
If $G_{E}(2)=\mathrm{2Cn}$, $j(E)=x^2+1728$ and the corresponding fiber product 
\\ $X_{2Cn} \times_{X_{0}(1)} X_{3Ns}$ is birational to $y^3=x^2+1728$, which is an elliptic curve $E'/\mathbb{Q}$ with $\mathrm{LMFDB}$ label $\href{http://www.lmfdb.org/EllipticCurve/Q/36/a/3}{\mathrm{36.a3}}$. Rational point on $E'$ corresponds to the $j$-invariant $1728$, so $E$ would have $\mathrm{CM}$, which contradicts our assumption.
\\
Consider the case $G_{E}(2) \in \{ \mathrm{2B}, \mathrm{2Cs} \}$. Using exactly the same reasoning as before, we conclude that $C_2 \oplus C_4 \subseteq E(\mathbb{Q}(E[2]))$ and $\mathbb{Q}(E[2])$ is at most quadratic over $\mathbb{Q}$. On the other hand, since $G_{E}(3)=\mathrm{3Cs.1.1}$, $\mathbb{Q}(E[3])$ is quadratic over $\mathbb{Q}$. Therefore, the composite field $L:=\mathbb{Q}(E[2])\mathbb{Q}(E[3])$ is either a $C_2 \oplus C_2$ or $C_2$ extension of $\mathbb{Q}$ and we have $C_6 \oplus C_{12} \subseteq E(L)$. But this is impossible by \cite[Theorem 1.4.]{22}.
\\
    \\
   \fbox{$C_7 \oplus C_{7}$}: Since $\mathbb{Q}(E[7]) \subseteq K$ we have $|G_{E}(7)| \le 6$, but looking at the possible mod $7$ images in \cite[Table 1]{2} we see that $|G_{E}(7)| \ge 18$, a contradiction.
   \\
   \\
    \fbox{$C_9 \oplus C_{9}$}: Since $\mathbb{Q}(E[9]) \subseteq K$ we have $|G_{E}(9)|=6$, because otherwise we would have $C_9 \oplus C_9 \in \Phi_{\mathbb{Q}}(3)$ or $C_9 \oplus C_9 \in \Phi_{\mathbb{Q}}(2)$, which isnt true by \cite{7}. Using \texttt{Magma} \cite{35}, we find all subgroups $G$ of $\Gl_{2}(\mathbb{Z}/9\mathbb{Z})$ of order $6$ such that $det(G)=(\mathbb{Z}/9\mathbb{Z})^{\times}$. All such groups $G$ are (up to conjugacy) subgroups of the group of upper triangular matrices, so $E$ has a rational $9$-isogeny. Additionally, all such groups $G$ reduce modulo $3$ to $\mathrm{3Cs.1.1}$ (up to conjugacy), which implies that $E$ has two independent rational $3$-isogenies. Therefore $E$ has independent rational $9$ and $3$-isogenies, so it's isogenous over $\mathbb{Q}$ to $E'/\mathbb{Q}$ with a rational $27$-isogeny. It follows that $E'$ has $\mathrm{CM}$ and so does $E$, which is a contradiction to our assumption.
\end{proof}

\begin{definition}\cite[Definition 3.1]{10}
We say that a finite group $G$ is of generalized $S_3$-type if it is isomorphic to a subgroup of a direct product $S_3 \times S_3 \times ... \times S_3$.
\end{definition}

\begin{theorem}\cite[Lemma 3.2, Corollary 3.4]{10} 
A finite group $G$ is of Generalized $S_3$-type if and only if
~\begin{itemize}
    \item $G$ is supersolvable
    \item Sylow subgroups of $G$ are Abelian
    \item Exponent of $G$ divides $6$.
\end{itemize}
Additionally, if $G$ is of Generalized $S_3$-type, then every subgroup and every quotient group of $G$ is also of Generalized $S_3$-type. If $G_1$ and $G_2$ is of Generalized $S_3$-type, then so is $G_1 \times G_2$.
\end{theorem}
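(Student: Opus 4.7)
My plan is to prove each direction of the biconditional and then deduce the closure statements. For the \emph{only if} direction, I would observe that $S_3$ itself satisfies all three bullet conditions: it is supersolvable via the chain $1 \triangleleft A_3 \triangleleft S_3$, its Sylow subgroups $C_2$ and $C_3$ are abelian, and its exponent is $6$. Each of these properties passes to direct products (supersolvability by a standard chain argument, Sylow-abelianness since Sylow subgroups of a direct product are products of Sylow subgroups of the factors, and exponents combine via $\mathrm{lcm}$) and to subgroups. Hence any $G \hookrightarrow S_3^n$ inherits the three properties.

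For the \emph{if} direction, assume the three conditions. The exponent condition forces $|G| = 2^a 3^b$ for some $a, b \ge 0$. Since $G$ is supersolvable and $3$ is the largest prime divisor of $|G|$, the Sylow $3$-subgroup $P_3$ is normal in $G$ (supersolvable groups admit an ordered Sylow tower), and Schur--Zassenhaus produces a complement $P_2$ realizing $G$ as the semidirect product of $P_3$ by $P_2$. The Sylow-abelianness and exponent hypotheses force $P_3 \cong \mathbb{F}_3^b$ and $P_2 \cong \mathbb{F}_2^a$. The action $P_2 \to \mathrm{GL}_b(\mathbb{F}_3)$ has image consisting of pairwise commuting involutions; since $x^2-1$ splits with distinct roots in $\mathbb{F}_3$, each such involution is diagonalizable, and commuting diagonalizable operators are simultaneously diagonalizable. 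Therefore $P_3 = \bigoplus_{i=1}^b \langle w_i \rangle$ as a $P_2$-module, each summand a line on which $P_2$ acts through a character $\chi_i \colon P_2 \to \{\pm 1\}$.

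To build an embedding $\Phi$ of $G$ into a power of $S_3$, I would assemble two families of homomorphisms. For each $i$, the projection of $P_3$ onto $\langle w_i\rangle$ combined with the quotient $P_2 \to P_2/\ker \chi_i$ defines a homomorphism from $G$ to a group that is either $C_3$ (when $\chi_i$ is trivial) or nonabelian of order $6$ (when $\chi_i$ is nontrivial); both $C_3$ and $S_3$ embed in $S_3$. One has to check this is a group homomorphism, which comes down to the $P_2$-equivariance of $\pi_i$ built into the decomposition. For each nontrivial character $\psi \colon P_2 \to \{\pm 1\}$, the composite $G \to P_2 \to P_2/\ker\psi \cong C_2$ provides a further map into $S_3$. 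Taking the product of all these homomorphisms, an element of $\ker\Phi$ has its $P_3$-component killed by every coordinate projection and its $P_2$-component killed by every nontrivial character of $P_2$; both intersections are trivial, the latter because characters separate points in an elementary abelian $2$-group. Hence $\Phi$ is injective.

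Finally, the closure properties fall out easily. A subgroup of $S_3^n$ remains in $S_3^n$. The product of embeddings $G_i \hookrightarrow S_3^{n_i}$ gives $G_1 \times G_2 \hookrightarrow S_3^{n_1+n_2}$. For a quotient $G/N$, one checks that the three bullet conditions descend through quotients (supersolvability and Sylow-abelianness are preserved by quotients, and the exponent of $G/N$ divides that of $G$) and then invokes the reverse implication just proved. I expect the main obstacle to be the simultaneous diagonalization step: it uses crucially that $P_2$ is elementary abelian and that we work in characteristic different from $2$, without which $P_3$ would not break into $P_2$-stable lines and the reassembly into $S_3$-factors would break down.
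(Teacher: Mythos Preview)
Your argument is correct and complete. However, there is nothing in the paper to compare it against: the paper does not prove this theorem at all. It is stated with an explicit citation to \cite[Lemma~3.2, Corollary~3.4]{10} and used as a black box in the subsequent arguments on $C_{36}$ and $C_2\oplus C_{30}$. So your write-up supplies a self-contained proof where the paper only imports the result.

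On the mathematics itself: your proof follows the natural line. The structure of the \emph{if} direction---normal Sylow $3$-subgroup from supersolvability, Schur--Zassenhaus complement, simultaneous diagonalization of the $P_2$-action on $P_3\cong\mathbb{F}_3^b$, and reassembly into coordinate maps landing in $S_3$---is exactly what one would expect, and matches the argument in the cited source. Two small points worth making explicit if you polish this: (i) when you say the map $(p_3,p_2)\mapsto(\pi_i(p_3),\,p_2\ker\chi_i)$ is a homomorphism, the verification really does reduce to $\pi_i$ being $P_2$-equivariant, but spelling out the one-line check $\pi_i(p_2\cdot p_3)=\chi_i(p_2)\pi_i(p_3)$ would remove any doubt; (ii) for the closure of Sylow-abelianness under quotients, the relevant fact is that a Sylow $p$-subgroup of $G/N$ has the form $PN/N\cong P/(P\cap N)$, a quotient of the abelian $P$. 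Neither of these is a gap---both are routine---but they are the two places a reader might pause.
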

\begin{theorem}[\cite{10}, Theorem 3.5, Theorem 3.6]
 $L$ be a number field such that $\Gal(\hat{L}/\mathbb{Q})$ is of Generalized $S_3$-type. Then $L \subseteq \hat{L} \subseteq \mathbb{Q}(3^{\infty})$.
 $L$ be a number field in $\mathbb{Q}(3^{\infty})$. Then $\hat{L} \subseteq \mathbb{Q}(3^{\infty})$ and $\Gal(\hat{L}/\mathbb{Q})$ is of Generalized $S_3$-type. 
\end{theorem}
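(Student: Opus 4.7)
The plan is to prove both parts of the theorem by establishing, for any number field $L$, the equivalence that $\hat{L} \subseteq \mathbb{Q}(3^{\infty})$ if and only if $\Gal(\hat{L}/\mathbb{Q})$ is of generalized $S_3$-type. I would use the characterization and the closure properties (under subgroups, quotients, and direct products) of generalized $S_3$-type groups from the preceding theorem, together with the definition of $\mathbb{Q}(3^{\infty})$ as the compositum of all number fields of degree at most $3$ over $\mathbb{Q}$.

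For the first assertion, suppose $G := \Gal(\hat{L}/\mathbb{Q})$ embeds into $S_3^n$. Composing with the $n$ coordinate projections yields homomorphisms $\pi_i : G \to S_3$ with $\bigcap_{i=1}^{n} \ker \pi_i = \{1\}$. By the Galois correspondence, the fixed fields $L_i := \hat{L}^{\ker \pi_i}$ are Galois over $\mathbb{Q}$ with $\Gal(L_i/\mathbb{Q}) \cong \pi_i(G) \le S_3$, and $\hat{L} = L_1 \cdots L_n$. Each $L_i$ is therefore trivial, quadratic, cyclic cubic, or a sextic $S_3$-extension; in the sextic case $L_i$ is itself the Galois closure of a non-Galois cubic subfield. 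In every case $L_i$ is contained in a compositum of number fields of degree at most $3$, so $L_i \subseteq \mathbb{Q}(3^{\infty})$, and hence $L \subseteq \hat{L} \subseteq \mathbb{Q}(3^{\infty})$.

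For the second assertion, suppose $L \subseteq \mathbb{Q}(3^{\infty})$. Since $[L:\mathbb{Q}] < \infty$, one can choose finitely many number fields $K_1, \ldots, K_m$, each of degree at most $3$, with $L \subseteq K_1 \cdots K_m$. Each Galois closure $\hat{K}_i$ has $\Gal(\hat{K}_i/\mathbb{Q})$ isomorphic to a subgroup of $S_3$, which is trivially of generalized $S_3$-type. The compositum $M := \hat{K}_1 \cdots \hat{K}_m$ is Galois over $\mathbb{Q}$ and contains $\hat{L}$, and the natural restriction map $\Gal(M/\mathbb{Q}) \hookrightarrow \prod_{i=1}^{m} \Gal(\hat{K}_i/\mathbb{Q}) \le S_3^m$ is injective. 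Therefore $\Gal(M/\mathbb{Q})$ is of generalized $S_3$-type, and by the subgroup/quotient closure property from the preceding theorem so is its quotient $\Gal(\hat{L}/\mathbb{Q})$. We also obtain $\hat{L} \subseteq M \subseteq \mathbb{Q}(3^{\infty})$.

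The only real subtlety is the bookkeeping around the adopted definition of $\mathbb{Q}(3^{\infty})$ and the observation that every sextic $S_3$-extension is the Galois closure of an explicit non-Galois cubic subfield. Once these definitional matters are set, both directions reduce to standard Galois-theoretic manipulations combined with the structural closure properties already packaged in the preceding theorem, so I anticipate no significant technical obstacle beyond verifying that the compositum-and-subdirect-product construction interacts cleanly with the closure axioms.
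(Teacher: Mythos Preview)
The paper does not supply a proof of this statement at all; it is quoted verbatim as Theorems~3.5 and~3.6 of \cite{10} and used as a black box in the subsequent arguments. There is therefore no ``paper's own proof'' against which to compare your proposal.

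That said, your argument is correct and is essentially the argument given in \cite{10}. The only point worth flagging is that in the second direction you need the finiteness of the set $\{K_1,\ldots,K_m\}$: since $L/\mathbb{Q}$ is finite, $L$ is generated over $\mathbb{Q}$ by finitely many elements, each of which already lies in some finite compositum of degree-$\le 3$ fields by the definition of $\mathbb{Q}(3^\infty)$. You implicitly assert this, and it is fine, but it is the one step where the definition of $\mathbb{Q}(3^\infty)$ as a directed union (rather than just an abstract infinite compositum) is actually being used.
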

It's easy to see that the groups $S_3$, $C_2$ and $C_3$ are of Generalized $S_3$-type and so are their direct products, $S_3 \times C_2$, $S_3 \times C_3$, $C_2 \times C_3 \cong C_6$.

\begin{theorem}
 $E/\mathbb{Q}$ be an elliptic curve without CM. Then $E(K)_{tors}$ can't contain $C_{36}$ or $C_2 \oplus C_{30}$.
\end{theorem}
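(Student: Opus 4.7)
The strategy is to dispose of each of the two groups in turn, using Lemma \ref{Lemma 3.3} as a common entry point to extract rational isogenies from the odd-order torsion of $E(K)$ and then combining this with Theorem \ref{Theorem 2.3} and the Galois image / division polynomial machinery already used above.

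\textbf{For $C_2 \oplus C_{30}$:} Since $E(K)$ contains points of orders $3$ and $5$, Lemma \ref{Lemma 3.3} yields rational $3$- and $5$-isogenies, hence a rational $15$-isogeny. By Theorem \ref{Theorem 2.3} the $j$-invariant lies in the same four-element list that appeared in the $C_{45}$ case, namely $\{-5^2/2,\ -5^2\cdot 241^3/2^3,\ -29^3\cdot 5/2^5,\ 211^3\cdot 5/2^{15}\}$. For each such $j$-invariant I would fix a representative $E_0/\mathbb{Q}$ and apply the division polynomial method to $f_{E_0,30}$, reading off the degrees of its $\mathbb{Q}$-irreducible factors; since a quadratic twist rescales $f_{E,n}$ only by a nonzero rational constant, this factorization pattern is an invariant of $j(E)$. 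The containment $C_2 \oplus C_{30} \subseteq E(K)$ additionally forces $\mathbb{Q}(E[2]) \subseteq K$, and since the splitting field of $\psi_{E,2}$ also depends only on $j(E)$, this is a second twist-invariant constraint. Combining the two conditions (existence of a factor of $f_{E_0,30}$ of degree dividing $6$, and simultaneous inclusion of $\mathbb{Q}(E[2])$ in the same sextic field) should eliminate all four $j$-invariants.

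\textbf{For $C_{36}$:} Let $P_9, P_4 \in E(K)$ have the indicated orders. Applying Lemma \ref{Lemma 3.3} to the order-$3$ point $3P_9$ shows $E$ has a rational $3$-isogeny, so $G_E(3) \in \{\mathrm{3Cs.1.1}, \mathrm{3B.1.1}, \mathrm{3B.1.2}\}$. Mirroring the $C_{25}$ argument, I would enumerate in \texttt{Magma} all subgroups $G \subseteq \Gl_2(\mathbb{Z}/9\mathbb{Z})$ with surjective determinant reducing modulo $3$ to one of these three images, and for each such $G$ compute the indices $[G:G_v]$ as $v$ ranges over the vectors of order $9$ in $(\mathbb{Z}/9\mathbb{Z})^2$. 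Theorems \ref{Theorem 2.1}, \ref{Theorem 2.4}, and \ref{Theorem 2.5} imply $[\mathbb{Q}(P_9):\mathbb{Q}] \in \{3,6\}$, so only lifts with $[G:G_v]\in\{3,6\}$ are retained. For each surviving mod-$9$ image the order-$4$ point supplies a parallel constraint on $G_E(4)$, refined via the Rouse--Zureick-Brown $2$-adic classification and the degree data of \cite{5}. A joint analysis of the resulting mod-$4$ and mod-$9$ data, either through the generalized $S_3$-type framework of Theorems 3.8--3.9 or by a direct enumeration of the joint mod-$36$ image inside $\Gl_2(\mathbb{Z}/36\mathbb{Z})$, should eliminate every remaining case.

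The main obstacle is the $C_{36}$ case. For $C_2 \oplus C_{30}$ the problem cleanly reduces to four explicit $j$-invariants and an essentially mechanical division polynomial computation, but for $C_{36}$ neither the mod-$4$ nor the mod-$9$ constraint is separately incompatible with a sextic field; the contradiction must arise from the coordinated analysis of the joint Galois image over a common sextic $K$, and ensuring that the generalized $S_3$-type or \texttt{Magma} enumeration really kills every compatible lift is where the argument could become delicate.
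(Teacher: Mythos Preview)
Your plan for $C_2\oplus C_{30}$ is on the same track as the paper's, though the paper organizes the computation a bit more economically. Rather than immediately running division polynomials on all four $15$-isogeny $j$-invariants, the paper first splits on $G_E(2)$. If $E(\mathbb{Q})[2]\neq 0$ then $E$ would have a rational $30$-isogeny, contradicting Theorem~\ref{Theorem 2.3}. If $G_E(2)=\mathrm{2Cn}$ then $j(E)=y^2+1728>1728$, while all four $15$-isogeny $j$-values are $<1728$, so this case dies with no computation. Only the case $G_E(2)=\Gl_2(\mathbb{F}_2)$ remains, and here $K=\mathbb{Q}(E[2])$ is a Galois $S_3$-extension, hence of generalized $S_3$-type, so $C_2\oplus C_{30}\subseteq E(\mathbb{Q}(3^\infty))$; the table in \cite[Theorem 1.8]{10} then cuts the list down to two $j$-invariants. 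For those two, the paper does exactly the division-polynomial check you describe, exploiting that $K$ is Galois to argue that any degree-$6$ factor of $f_{30}$ would have to split completely in $K$, and then verifying that the splitting fields actually have degree $12$. Your direct approach over all four $j$-invariants would also work, but you would need to make this Galois-splitting observation precise rather than leaving it at ``combining the two conditions should eliminate all four.''

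For $C_{36}$ you are making the problem much harder than it is. The paper does \emph{not} enumerate mod-$9$ images, mod-$4$ images, or joint mod-$36$ images at all. The whole argument is: any degree-$\leq 3$ number field already lies in $\mathbb{Q}(3^\infty)$; if $[\mathbb{Q}(P_9):\mathbb{Q}]=6$ one checks (a short \texttt{Magma} computation over the possible images) that $\Gal(\widehat{\mathbb{Q}(P_9)}/\mathbb{Q})$ is one of $C_6,\,S_3,\,S_3\times C_3,\,S_3\times C_2$, all of generalized $S_3$-type, so $\mathbb{Q}(P_9)\subseteq\mathbb{Q}(3^\infty)$ by Theorem~3.9; and similarly $\mathbb{Q}(P_4)\subseteq\mathbb{Q}(3^\infty)$ (here \cite{36} shows that if $[\mathbb{Q}(P_4):\mathbb{Q}]=6$ the extension is $S_3$). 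Hence $\mathbb{Q}(P_9+P_4)\subseteq\mathbb{Q}(3^\infty)$, and \cite[Theorem~1.8]{10} says $C_{36}$ never occurs in $E(\mathbb{Q}(3^\infty))_{\mathrm{tors}}$, done. The ``delicate'' joint analysis you anticipated is entirely bypassed: once both fields are shown to sit inside $\mathbb{Q}(3^\infty)$, you never need them to sit inside a common sextic field, because the classification over $\mathbb{Q}(3^\infty)$ already excludes $C_{36}$.
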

\begin{proof}
 \fbox{$C_{36}$}:  $P_9,P_4$ be points of order $9$ and $4$, such that $[\mathbb{Q}(P_9+P_4):\mathbb{Q}]=6$. If $[\mathbb{Q}(P_9):\mathbb{Q}] \in \{ 1,2,3 \}$, then we have $\mathbb{Q}(P_9) \subseteq \mathbb{Q}(3^{\infty})$, since every quadratic and cubic extension is contained in $\mathbb{Q}(3^{\infty})$. If $[\mathbb{Q}(P_9):\mathbb{Q}] = 6$, we check using \texttt{Magma} \cite{35}  \space that $\Gal(\widehat{\mathbb{Q}(P_9)}/\mathbb{Q})$ is isomorphic to one of the following groups: $C_6, S_3, S_3 \times C_3, S_3 \times C_2$. All these groups are of generalized $S_3$-type, so it follows that $\mathbb{Q}(P_9) \subseteq \mathbb{Q}(3^{\infty})$. Similarly, a point $P_4$ can be defined over extensions of degree $1,2,3$ or $6$. If $[\mathbb{Q}(P_4):\mathbb{Q}] \in \{ 1,2,3 \}$, then we have $\mathbb{Q}(P_4) \subseteq \mathbb{Q}(3^{\infty})$. If $[\mathbb{Q}(P_4):\mathbb{Q}]=6 $, by the results in \cite{35} we see that if $[\mathbb{Q}(P_4):\mathbb{Q}]=6$, then $\mathbb{Q}(P_4)$ is an $S_3$ extension of $\mathbb{Q}$, hence of generalized $S_3$-type. We conclude that in any case we have $\mathbb{Q}(P_4) \subseteq \mathbb{Q}(3^{\infty})$. We can now conclude that $\mathbb{Q}(P_9+P_4) = \mathbb{Q}(P_9,P_4) \subseteq \mathbb{Q}(3^{\infty})$, which is impossible by \cite[Theorem 1.8.]{10}.
 \\
\\
\fbox{$C_2 \oplus C_{30}$}: By Lemma \ref{Lemma 3.3}, $E$ has a rational $3$ and $5$-isogenies, so it has a rational $15$-isogeny. If $E(\mathbb{Q})[2] \supseteq C_2$, then $E$ has a rational $30$-isogeny, which is impossible by Theorem \ref{Theorem 2.3}. If $G_{E}(2)=\mathrm{2Cn}$, then $j(E)=y^2+1728$ and since $E$ has $15$-isogeny we $j(E) \in \{ -5^2/2, -5^2 \cdot241^3/2^3, -29^{3} \cdot 5/2^{5}, 211^3 \cdot 5/2^{15} \}$.  $a$ be one of those $4$ values. We have that $a < 1728$, so $y^2+1728=a$ does not have a solution in real numbers, so this case is impossible.  Consider now the case when $G_{E}(2)=\Gl_{2}(\mathbb{F}_2)$. This means that $E$ attains it's full $2$-torsion over degree $6$ extension of $\mathbb{Q}$. Since $E(K)[2]=C_2 \oplus C_2$, we have $K=\mathbb{Q}(E[2]) \subseteq \mathbb{Q}(3^{\infty})$, because the Galois group of $\mathbb{Q}(E[2])$ is of generalized $S_3$-type. Therefore, $C_2 \oplus C_{30} \subseteq E(K) \subseteq E(3^{\infty})$. By \cite[Theorem 1.8., Table 1]{10}we see that $j(E) \in \{ \frac{-121945}{32}, \frac{46969655}{32768} \}$. For each of these two possiblities, using division polynomial method we calculate a polynomial $f_{30}$ whose roots are $x$-coordinates of points of exact order $30$. If $j(E)=\frac{-121945}{32}$, the smallest irreducible factors of $f_{30}$ are polynomials $f,g$ of degree $6$. Since $C_{30} \subseteq E(K)=E(\mathbb{Q}(E[2]))$, one of those polynomials needs to have a root in $K$, but since $K$ is Galois, it splits in $K$. But we check using \texttt{Magma} \cite{35} that the splitting fields of $f$ and $g$ are degree $12$-extensions of $\mathbb{Q}$, which is a contradiction. If $j(E)=\frac{46969655}{32768}$, we do the same as in the previous case. This time, the polynomial $f_{30}$ does not have irreducible factors of degree $\le 6$, so $C_{30} \not\subseteq E(K)$.
\end{proof}

\begin{theorem}
 $E/\mathbb{Q}$ be an elliptic curve without CM. If the $2$-adic representation of $E$ does not equal to $2B$, then $E(K)_{tors}$ can't contain $C_3 \oplus C_{18}$.
\end{theorem}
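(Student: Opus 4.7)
The plan is to adapt the argument used in the $C_{36}$ case of Theorem 3.10: we place the field of definition of the putative $C_3\oplus C_{18}$ subgroup inside $\mathbb{Q}(3^{\infty})$ and then invoke \cite[Theorem 1.8]{10} to obtain a contradiction.

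Suppose $C_3\oplus C_{18}\subseteq E(K)$ and, by the Chinese Remainder Theorem, decompose a point of order $18$ as $P_{18}=P_9+P_2$ with $P_9,P_2\in E(K)$ of orders $9$ and $2$ respectively. Since $E[3]\subseteq E(K)$ we have $|G_E(3)|\mid 6$; as in the $C_3\oplus C_{15}$ argument this forces $G_E(3)\in\{3Cs.1.1,3B.1.1,3B.1.2\}$, so $\Gal(\mathbb{Q}(E[3])/\mathbb{Q})$ is $C_2$ or $S_3$, both of generalized $S_3$-type, giving $\mathbb{Q}(E[3])\subseteq\mathbb{Q}(3^{\infty})$ by Theorem 3.9.

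Next I would argue $\mathbb{Q}(P_9)\subseteq\mathbb{Q}(3^{\infty})$ exactly as is done for the $9$-torsion point in Theorem 3.10: $[\mathbb{Q}(P_9):\mathbb{Q}]$ divides $6$; the cases of degree $1,2,3$ yield cyclic Galois closures (hence of generalized $S_3$-type), and the degree-$6$ case is settled by enumerating in \texttt{Magma} the subgroups of $\Gl_2(\mathbb{Z}/9\mathbb{Z})$ with surjective determinant and a vector-stabiliser of index $6$, checking that each resulting Galois closure $\Gal(\widehat{\mathbb{Q}(P_9)}/\mathbb{Q})$ is one of $C_6, S_3, S_3\times C_2, S_3\times C_3$. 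For $\mathbb{Q}(P_2)$ the hypothesis $G_E(2)\neq 2B$ is needed: the remaining mod-$2$ options are $G_E(2)$ trivial (so $P_2\in E(\mathbb{Q})$), $G_E(2)=2Cn$ (so $\mathbb{Q}(P_2)$ is cyclic cubic), or $G_E(2)=\Gl_2(\mathbb{F}_2)$ (so $\Gal(\widehat{\mathbb{Q}(P_2)}/\mathbb{Q})\cong S_3$), in each case of generalized $S_3$-type.

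Combining the three inclusions via Theorem 3.8 (closure of the generalized $S_3$-type class under direct products and subgroups) gives $\mathbb{Q}(P_9,P_2,E[3])\subseteq\mathbb{Q}(3^{\infty})$, so $C_3\oplus C_{18}\subseteq E(\mathbb{Q}(3^{\infty}))_{tors}$, contradicting \cite[Theorem 1.8]{10}. The main technical obstacle is the degree-$6$ subcase for $\mathbb{Q}(P_9)$, which mirrors the corresponding step in Theorem 3.10 and requires a routine but careful computational enumeration of the admissible mod-$9$ images; the exclusion of $2B$ is invoked solely to guarantee that $\Gal(\widehat{\mathbb{Q}(P_2)}/\mathbb{Q})$ always lands in the generalized $S_3$-type list in a uniform way, so that the compositum argument runs without further case analysis.
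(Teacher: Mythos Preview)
Your argument has a genuine gap at the final step. You conclude that $C_3\oplus C_{18}\subseteq E(\mathbb{Q}(3^\infty))_{tors}$ and claim this contradicts \cite[Theorem~1.8]{10}, but in fact $C_3\oplus C_{18}$ is \emph{not} excluded by that classification. Indeed, the paper itself places $E(K)$ inside $E(\mathbb{Q}(3^\infty))$ in one of the subcases (when $G_E(3)\in\{3B.1.1,3B.1.2\}$ and $E$ has no rational $9$-isogeny) and then uses \cite[Lemma~6.13]{10} only to extract a parametrisation of $j(E)$, not a contradiction; the author then still has to analyse the resulting modular curves. Your approach would, if it worked, prove strictly more than the theorem: your inclusion $\mathbb{Q}(P_2)\subseteq\mathbb{Q}(3^\infty)$ goes through just as well when $G_E(2)=2B$ (one of the $2$-torsion points is rational, and the paper already notes that every quadratic extension lies in $\mathbb{Q}(3^\infty)$), so your stated reason for needing the hypothesis $G_E(2)\neq 2B$ is not the actual obstruction. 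Since the author explicitly cannot resolve the $2B$ case and cites \cite[Theorem~1.8]{10} freely elsewhere ($C_{36}$, $C_2\oplus C_{30}$), this confirms that the appeal to \cite[Theorem~1.8]{10} cannot close the argument here.

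The paper's actual proof is a case-by-case analysis on the pair $(G_E(3),G_E(2))$. For $G_E(3)\in\{3B.1.1,3B.1.2\}$ one has $K=\mathbb{Q}(E[3])$ Galois with group $S_3$; the cases $2Cs$ and $2Cn$ are dispatched by earlier results, and for $\Gl_2(\mathbb{F}_2)$ one combines an explicit Weierstrass family for curves with a rational $9$-isogeny (or the parametrisation of $j(E)$ from \cite[Lemma~6.13]{10} when there is no $9$-isogeny) with the entanglement condition $\mathbb{Q}(E[2])=\mathbb{Q}(E[3])$ to cut out a curve with no relevant rational points. For $G_E(3)=3Cs.1.1$ the $2Cs$ and $2Cn$ cases were already excluded in the $C_6\oplus C_{12}$ analysis, and the $\Gl_2(\mathbb{F}_2)$ case is reduced, via the discriminant condition $\mathbb{Q}(\sqrt{\Delta})=\mathbb{Q}(\zeta_3)$, to finding rational points on an explicit elliptic curve of rank~$0$. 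The hypothesis $G_E(2)\neq 2B$ is there precisely because the remaining fibre products (discussed after the proof) are non-hyperelliptic curves of genus $3$ and $4$ with no tractable quotient.
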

\begin{proof}
We will split the proof into two main cases, depending on $G_{E}(3)$.
\\
\fbox{$G_{E}(3) \in \{ \mathrm{3B.1.1, 3B.1.2} \}$}.
\\
\\
We have $K=\mathbb{Q}(E[3])$ and $K$ is $S_3$ extension of $\mathbb{Q}$.
\\If $G_{E}(2)=\mathrm{2Cs}$, this is shown to be impossible by \cite[Proposition 6.(m)]{8}.
\\If $G_{E}(2)=\mathrm{2Cn}$, then $\mathbb{Q}(E[2])$ is cubic Galois over $\mathbb{Q}$ contained in $K$, which is impossible since $K$ is an $S_3$ extension of $\mathbb{Q}$.
\\
\\
Assume first that $E$ has a rational $9$-isogeny.
\\ $G_{E}(2)=\Gl_{2}(\mathbb{F}_2)$. Since $K$ is Galois and $E$ has a point of order $2$ in $K$ and the defining cubic polynomial $f(x)$ of $E$ is irreducible and has a root in $K$, it splits in $K$. Therefore we have $K=\mathbb{Q}(E[2])=\mathbb{Q}(E[3])$. Since $E$ has a rational $9$-isogeny, by \cite[Appendix]{37}, we have that $E$ is a twist of elliptic curve \[E_t: y^2=x^3-3t(t^3-24)x+2(t^6-36t^3+216), \] where $t \in \mathbb{Q} \setminus \{3\}$. We have $j(E_t)=\frac{t^3(t^3-24)^3}{t^3-27}$ and $\Delta(E_t)=2^{12}3^6(t^3-27)$. It can be easily seen that for $t \in \{-6 , 0\}$ $E_t$ has $\mathrm{CM}$. Since $E$ is a twist of some $E_t$, we have $\Delta(E)=u^{12}\Delta(E_t)$, for some $u \in \mathbb{Q}$. The Weil pairing implies that $\mathbb{Q}(\zeta_3) \subseteq K$ and since $K$ is an $S_3$ extension of $\mathbb{Q}$, we conclude that $\Gal(K/\mathbb{Q}(\zeta_3)) \cong C_3$, which implies that the discriminant of $E$ is a square in $\mathbb{Q}(\zeta_3)$, which is equivalent to \[C: y^2=t^3-27, t \in \mathbb{Q}\setminus \{-6, 0, 3 \}, y \in \mathbb{Q}(\zeta_3) \] having a solution. Using \texttt{Magma} \cite{35} we find that such a solution does not exist. Therefore, in this case there does not exist an elliptic curve with $C_3 \oplus C_{18}$ torsion defined over $\mathbb{Q}$. 
\\ 
\\
Assume now that $E$ does not have a rational $9$-isogeny.
\\
Obviously, $E(\mathbb{Q}(3^{\infty}))$ contains a point of order $9$, since $E(\mathbb{Q}(3^{\infty})) \supseteq E(K)$. By \cite[Lemma 6.13.]{10}, we get that $j(E)=\frac{(x+3)(x^2-3x+9)(x^3+3)^3}{x^3}$.
\\
If $G_{E}(2)=\mathrm{2B}$, we have that $j(E)=\frac{256(y+1)^3}{y}$. The induced modular curve $C$ is genus $2$ hyperelliptic curve. Computation in \texttt{Magma} \cite{35} shows that points on $C$ do not correspond to elliptic curves with $C_3 \oplus C_{18}$ torsion over sextic fields. 
\\ now $G_{E}(2)=\Gl_{2}(\mathbb{F}_2)$. Since $f(x)$ is irreducible and it has a root in $K$, it splits in $K$, since $K$ is Galois. Therefore we have $\mathbb{Q}(E[2])=\mathbb{Q}(E[3])=K$. By \cite[Remark 1.5]{28}, we have $j(E)=2^{10}3^{3}y^3(1-4y^3)$, for some $y \in \mathbb{Q}$. The induced modular curve is an elliptic curve with $C_3$ torsion and rank $0$ over $\mathbb{Q}$. None of the points on this curve correspond to elliptic curves with $C_3 \oplus C_{18}$ torsion over sextic field, which is checked using \texttt{Magma} \cite{35}.
\\
\\
\fbox{$G_{E}(3)=\mathrm{3Cs.1.1}$}
Since $\mathrm{3Cs.1.1} \subseteq \mathrm{3Ns}$, we have $j(E)=y^3$.
\\
If $G_{E}(2) \in \{\mathrm{2Cn}, \mathrm{2Cs} \}$, this has already been shown to be impossible by Theorem 3.6, in $C_6 \oplus C_{12}$ case.
\\
If $G_{E}(2)=\Gl_{2}(\mathbb{F}_2)$, then we either have $K=\mathbb{Q}(E[2])$ or $K=L\mathbb{Q}(\zeta_3)$, where $L$ is degree $3$ extension of $\mathbb{Q}$ contained in $\mathbb{Q}(E[2])$. Obviously, $E(L)[2]=C_2$.
\\
If $K=L\mathbb{Q}(\zeta_3)$, then $\Gal(\hat{K}/\mathbb{Q}) \cong S_3 \times C_2$. Using \texttt{Magma} \cite{35}  we find that if $P_9$ is a point of order $9$ defined on $K$ and $G_{E}(3)=\mathrm{3Cs.1.1}$, then we can't have $[\mathbb{Q}(P_9):\mathbb{Q}]=6$, because a Galois closure of $\mathbb{Q}(P_9)$ over $\mathbb{Q}$ is one of the following groups: $C_6$, $S_3$, $S_3 \times C_3$. Since $K$ contains only two subfields, $\mathbb{Q}(\zeta_3)$ and $L$ (which we check using \texttt{Magma} \cite{35}), we either have $\mathbb{Q}(P_9) \subseteq \mathbb{Q}(\zeta_3)$, in which case $E(\mathbb{Q}(\zeta_3)) \supseteq C_3 \times C_9$ (which is impossible by Theorem 2.4) or $\mathbb{Q}(P_9)=L$. This means that $E(\mathbb{Q})=C_3$ and $E(L)=C_{18}$, but this is impossible by \cite[Theorem 2]{9}. Therefore, we need to have $K=\mathbb{Q}(E[2])$. Since $\mathbb{Q}(\zeta_3) \subseteq \mathbb{Q}(E[2])=K$, we need to have $\mathbb{Q}(\sqrt{\Delta})=\mathbb{Q}(\zeta_3)=\mathbb{Q}(\sqrt{-3})$. From this equality it follows that $\sqrt{\Delta}=\alpha+\beta \sqrt{-3}$, for some rational $\alpha, \beta$. It's easy to see that $\alpha=0$, so $\Delta=-3\beta^2$. Since $G_{E}(3)=\mathrm{3Cs.1.1}$, by \cite[Theorem 1.2.]{3} we have that $E$ is isomorphic to 
$y^2 = x^3-3(t + 1)(t + 3)(t^2 + 3)x-2(t^2-3)(t^4 + 6t^3 + 18t^2 + 18t + 9)=x^3+ax+b$, for some $t \in \mathbb{Q}$ or a quadratic twist by $-3$ of such curve. Since twisting does not change $2$-division field, we have that $\Delta=4a^3+27b^2=4(-3(t + 1)(t + 3)(t^2 + 3))^3+27(2(t^2-3)(t^4 + 6t^3 + 18t^2 + 18t + 9))^2=(t(t^2+3t+3))^3=-3\beta^2$. Plugging in $t=-3t_1$ and $\beta=\frac{\beta_{1}}{3^5}$ in $(t(t^2+3t+3))^3=-3\beta^2$ we obtain $(t_{1}(t_{1}^2-9t_{1}+27))^3=\beta_{1}^2$. Therefore, $t_{1}(t_{1}^2-9t_{1}+27)$ must be a square, so $t_{1}(t_{1}^2-9t_{1}+27)=\beta_{2}^2$, where $\beta_{2}^{6}=\beta_{1}^2$. Finally, put $t_2=t_1-3$ to obtain $t_{2}^3+27=\beta_{2}^2$, which is an elliptic curve $E'$ with $\mathrm{LMFDB}$ label $\href{http://www.lmfdb.org/EllipticCurve/Q/144/a/4}{\mathrm{144.a4}}$ and the only non trivial rational point on $E'$ is $(-3,0)$. We have that $\beta_2=0$ and so $\beta=0$, but this is impossible, because $0 \neq \Delta=-3\beta^2$.
\end{proof}
 us adress the issue that occurs in the case $G_{E}(2)=\mathrm{2B}$. Assume for convenience that $G_{E}(2)=\mathrm{3Cs.1.1}$. Using \texttt{Magma} \cite{35}, we search for possible mod $9$ images of $E$ such that $E$ has a point of order $9$ defined over sextic number field. For each possibility for $G_{E}(9)$, we find that it is contained in one of the groups with labels $9\mathrm{H}^{0}-9a$, $9\mathrm{H}^{0}-9b$ or $9\mathrm{H}^{0}-9c$ from \cite[Table 1]{34}. The modular curve induced by combining $j$-maps of one of these groups (also available in \cite[Table 1]{34}), along with $j$-map of elliptic curve $E$ with $G_{E}(2)=\mathrm{2B}$, we get a few genus $3$ and $4$ curves that are not hyperelliptic and which do not have a a nice quotient curve.

\section*{Acknowledgements}
The author would like to thank his advisor, Filip Najman, for the multitude of helpful conversations on the topic. Author would also like to thank David Zuerick-Brown, Maarten Derickx and Jackson S. Morrow.

\bibliographystyle{amsplain}

\begin{thebibliography}{10}


\bibitem{35} Wieb Bosma, John Cannon, and Catherine Playoust, The Magma algebra system. I. The user language, J. Symbolic Comput., 24 (1997), 235–265.

\bibitem{28} J. Brau and N. Jones, \textit{Elliptic curves with 2-torsion contained in the 3-torsion field}, PProc.
Amer. Math. Soc., 144 (2016), 925–936.

\bibitem{22} M. Chou, \textit{Torsion of rational elliptic curves over quartic Galois number fields}, J. Number Theory 160 (2016),
603-628.

\bibitem{30} P. L. Clark, P. Corn, A. Rice and J. Stankiewicz, \textit{Computation on elliptic curves with comples multiplication},  LMS J.
Comput. Math. 17 (2014), 509-539.

\bibitem{8} H. B. Daniels and E. Gonz\'alez-Jim\'enez, \textit{On the torsion of rational elliptic curves over sextic fields}, To appear in Mathematics of Computation data, arXiv: 1808.02887

\bibitem{10} H. B. Daniels, A. Lozano-Robledo, F.Najman and A.V. Sutherland, \textit{Torsion subgroups of rational elliptic curves over the compositum of all cubic fields}, Math. Comp. 87 (2018), 425-458.


\bibitem{27} M. Derickx and A. V. Sutherland, \textit{Torsion subgroups of elliptic curves over quintic and sextic number fields}, Proc.
Amer. Math. Soc. 145 (2017), 4233-4245.

\bibitem{24} E. Gonz\'alez-Jim\'enez, \textit{Compe classification of the torsion structures of rational elliptic curves over quintic number fields},  J. Algebra 478 (2017), 484-505. 

\bibitem{36} E. Gonz\'alez-Jim\'enez and A. Lozano-Robledo, \textit{On the minimal degree of definition of p-primary torsion subgroups of elliptic curves}, Mathematical Research ter 24 (2017) 1067-1096.

\bibitem{1} E. Gonz\'alez-Jim\'enez and F. Najman, \textit{An Algorithm for determining torsion growth of elliptic curves}, preprint.

\bibitem{2} E. Gonz\'alez-Jim\'enez and F. Najman and , \textit{Growth of torsion groups of elliptic curves upon base change}, Math. Comp. to appear.

\bibitem{9} E. Gonz\'alez-Jim\'enez, F. Najman and J. M. Tornero, \textit{Torsion of rational elliptic curves over cubic fields}, es over cubic fields, Rocky
Mountain J. Math. 46 (2016), 1899-1917.

\bibitem{37} P. Ingram, \textit{Diophantine analysis and torsion points on elliptic curves}, Proc. London Math. Soc. 94 (2007), 473–486.

\bibitem{25} D. Jeon, C. H. Kim and A. Schweizer, \textit{On the torsion of elliptic curves over cubic number fields}, Acta Arith. 113
(2004) 291-301.

\bibitem{26} D. Jeon, C. H. Kim and E. Park, \textit{On the torsion of elliptic curves over quartic number fields}, J. London Math.
Soc. 74 (2006), 1-12.




\bibitem{13} S. Kamienny, \textit{Torsion points on elliptic curves and q-coefficients of modular forms}, Invent. Math. 109 (1992),
221-229.

\bibitem{20} M. A. Kenku and F. Momose, \textit{Torsion points on elliptic curves defined over quadratic field}, Nagoya Math. J.
109 (1988), 125-149. 

\bibitem{14} M. A. Kenku, \textit{The modular curve $X_{0}(39)$ and rational isogeny}, Math. Proc. Cambridge Philos. Soc. 85 (1979),
21-23. 
\bibitem{15} M. A. Kenku, \textit{The modular curves $X_{0}(65)$ and $X_{0}(91)$ and rational isogeny}, Math. Proc. Cambridge Philos. Soc.
87 (1980), 15-20.
\bibitem{16} M. A. Kenku, \textit{The modular curve $X_{0}(169)$ and rational isogeny}, J. London Math. Soc. (2) 22 (1980), 239-244.

\bibitem{17} M. A. Kenku, \textit{The modular curve $X_{0}(125)$, $X_{1}(25)$ and  $X_{1}(49)$}, J. London Math. Soc. (2) 23 (1981), 415-427.

\bibitem{18} LMFDB Collaboration, \textit{The L-functions and modular forms database, available at http://www.lmfdb.org.}

\bibitem{5} A. Lozano-Robledo and E. Gonz\'alez-Jim\'enez, \textit{On the minimal degree of definition of $p$-primary torsion subgroups of elliptic curves}, Math. Res. t. 24 (2017), 1067-1096. (data files
http://matematicas.uam.es/~enrique.gonzalez.jimenez/).

\bibitem{6} A. Lozano-Robledo,  \textit{On the field of definition of $p$-torsion points on elliptic curves over the rationals}, Math. Ann 357 (2013), 279-305.


\bibitem{11} B. Mazur, \textit{Modular curves and the Eisenstein ideal}, Inst. Hautes Études Sci. Publ. Math. 47 (1978), 33-186.

\bibitem{12} B. Mazur, \textit{Rational isogenies of prime degree}, Invent. Math. 44 (1978), pp. 129-162.

\bibitem{21} L. Merel, \textit{Bornes pour la torsion des courbes elliptiques sur les corps de nombres}, Invent. Math. 124 (1996),
437-449.

\bibitem{4} J. S. Morrow, \textit{Composite images of Galois for elliptic curves over $\mathbb{Q}$ and entanglement fields}, Math. Comp. 88 (2019), 2389-2421.



\bibitem{7} F. Najman, \textit{Torsion of rational elliptic curves over cubic fields and sporadic points on $X_{1}(n)$}, , Math. Res. ters,
23 (2016) 245-272. 



\bibitem{29} F. Najman, \textit{Compe classification of torsion of elliptic curves over quadratic cyclotomic fields}, J. Number Theory 130 (2010), 1964-1968.

\bibitem{32} L. D. Olson, \textit{Points of finite order on elliptic curves with complex multiplication}, Manuscripta Math.
14 (1974) 195–205.

\bibitem{31} J. Rouse and D. Zuerick-Brown \textit{Elliptic curves over $\mathbb{Q}$ and $2$-adic images of Galois}, Research in Number Theory
1:12, 2015. (Data files and subgroup descriptions available at http://users.wfu.edu/rouseja/2adic/).

\bibitem{19} A. V. Sutherland, \textit{Computing images of Galois representations attached to elliptic curves}, Forum Math. Sigma 4
(2016), e4, 79 pp.

\bibitem{34} A. V. Sutherland, D. Zywina, \textit{Modular curves of prime-power level with infinitely many rational points}

\bibitem{33}  L. Washington, \textit{Elliptic Curves. Number Theory and Cryptography}, Discrete Mathematics and its Applications (Boca Raton), Chapman and Hall/CRC, Boca Raton, 2003.

\bibitem{3} D. Zywina, \textit{On the possible images of the mod $l$ representations associated to elliptic curves over $\mathbb{Q}$}, arXiv:1508.07660. 






\end{thebibliography}

\end{document}